\newtheorem{theorem}{Theorem}[section]
\newtheorem{prop}{Proposition}[section]
\newtheorem{remark}{Remark}[section]
\newcommand{\ml}{\mathcal}
\newcommand{\mb}{\mathbb}
\DeclareMathOperator{\intt}{int}
\DeclareMathOperator{\extt}{ext}
\DeclareMathOperator{\bdd}{bdd}
\DeclareMathOperator{\lin}{lin}
\DeclareMathOperator{\nlin}{nlin}
\def\XXint#1#2#3{{\setbox0=\hbox{$#1{#2#3}{\int}$ }
		\vcenter{\hbox{$#2#3$ }}\kern-.6\wd0}}
\title{Sharp behavior of semilinear damped wave equations driven by mixed local-nonlocal operators}
\author[1]{Wenhui Chen\thanks{Wenhui Chen (wenhui.chen.math@gmail.com)}}
\affil[1]{School of Mathematics and Information Science, Guangzhou University,\authorcr Guangzhou, P.R. China}
\author[2]{Tuan Anh Dao\thanks{Tuan Anh Dao (anh.daotuan@hust.edu.vn)}}
\affil[2]{Faculty of Mathematics and Informatics, Hanoi University of Science and Technology,\authorcr Hanoi, Vietnam}
\date{}
\begin{document}
		\maketitle

		\begin{abstract}
			\medskip
This paper investigates the Cauchy problem for the semilinear damped wave equation $u_{tt}+\mathcal{L}_{a,b}u+u_t=|u|^p$ with the mixed local-nonlocal operator $\mathcal{L}_{a,b}:=-a\Delta+b(-\Delta)^{\sigma}$, where $a,b\in\mathbb{R}_+$ and $\sigma\in(0,1)\cup (1,+\infty)$. We determine the critical exponent for this problem being $p_{\mathrm{crit}}=1+\frac{2\min\{1,\sigma\}}{n}$, which sharply separates global in-time existence and finite-time blow-up of solutions. Furthermore, for the super-critical case $p>p_{\mathrm{crit}}$, we establish the asymptotic profiles of global in-time solutions, showing the anomalous diffusion when $\sigma\in(0,1)$ and the classical diffusion when $\sigma\in(1,+\infty)$, together with the sharp decay estimates. For solutions blowing up in finite time when $1<p\leqslant p_{\mathrm{crit}}$, we derive the sharp estimates for upper and lower bounds of lifespan. Our results reveal the crucial influence of mixed operators on the qualitative properties of solutions, fundamentally governing their critical phenomena, large-time behavior and blow-up dynamics, via $\max\{1,\sigma\}$ or $\min\{1,\sigma\}$.
			\\
			
\noindent\textbf{Keywords:} semilinear damped wave equation, mixed local-nonlocal operator, critical exponent, sharp lifespan estimate, asymptotic profile, sharp decay estimate\\
			
\noindent\textbf{AMS Classification (2020)} 35L71, 35B33, 35A01, 35B44, 35B40
		\end{abstract}
\fontsize{12}{15}
\selectfont

\section{Introduction}\setcounter{equation}{0}\label{Section-Introduction}
\hspace{5mm}Let $\ml{L}_{a,b}$ be the mixed local-nonlocal operator
\begin{align*}
	\ml{L}_{a,b}:=-a\Delta+b(-\Delta)^{\sigma}\ \ \text{with}\ \ a,b\in\mb{R}_+,
\end{align*}
where $(-\Delta)^{\sigma}$ denotes the fractional Laplacian of order $\sigma\in(0,1)\cup (1,+\infty)$, which always offers mathematical challenges caused by the lack of invariance under scaling. The operator $\ml{L}_{a,b}$ arises naturally in the scaling limit of stochastic processes \cite[Appendix]{Dipierro-Proietti-Valdinoci=2023} that combine the classical Brownian motion (modeled by $-a\Delta$) and the L\'evy flight (modeled by $b(-\Delta)^{\sigma}$). It provides a unified framework to analyze the interplay between local and nonlocal diffusion effects. This mixed operator has some applications in diverse fields, for example, the effectiveness of different pandemic containment strategies (comparing ``regional'' versus ``global'' restrictions) \cite{Dipierro-Proietti-Valdinoci=2023}, the  bi-model power law distribution processes \cite{Pagnini-Vitali=2021}, and the optimal animal foraging patterns \cite{Dipierro-Valdinoci=2021,Dipierro-Proietti-Valdinoci=2023}. The mathematical analysis of semilinear elliptic and parabolic equations involving the operator $\ml{L}_{a,b}$ is well-developed in  \cite{Biagi-Dipierro-Vecchi=2022,Dipierro-Proietti-Valdinoci=2023,Biagi-Dipierro-Valdinoci=2023,Biagi-Punzo-Vecchi=2025,Del-Pezzo-Ferreira=2025} and references given therein.
For instance, the critical exponent $p_{\mathrm{Heat}}:=1+\frac{2\sigma}{n}$ has been verified by \cite{Biagi-Punzo-Vecchi=2025} recently for the parabolic equation
\begin{align}\label{Eq-Parabolic}
\begin{cases}
	u_t+\ml{L}_{a,b}u=u^p,&x\in\mb{R}^n,\ t\in\mb{R}_+,\\
	u(0,x)= u_0(x),&x\in\mb{R}^n,
\end{cases}
\end{align}
carrying $\sigma\in(0,1)$ and $p>1$. However, to the best of our knowledge, the role of $\ml{L}_{a,b}$ for hyperbolic equations seems unexplored from the literature. It motivates us in this manuscript to study a typical example, that is a classical damped wave equation.

This paper is devoted to the study of semilinear damped wave equations driven by the mixed local-nonlocal operator $\ml{L}_{a,b}$ in the whole space $\mb{R}^n$, namely,
\begin{align}\label{Eq-Nonlinear-Local-Nonlocal}
\begin{cases}
u_{tt}+\ml{L}_{a,b}u+u_t=|u|^p,&x\in\mb{R}^n,\ t\in\mb{R}_+,\\
u(0,x)=\varepsilon u_0(x),\ u_t(0,x)=\varepsilon u_1(x),&x\in\mb{R}^n,
\end{cases}
\end{align}
with the power exponent $p>1$, where $\varepsilon$ is a positive parameter describing the magnitude of the initial data. Notice that \eqref{Eq-Parabolic} is a singular limit model of \eqref{Eq-Nonlinear-Local-Nonlocal} by taking the vanishing coefficient of $u_{tt}$. Let $\sigma_{\max}:=\max\{1,\sigma\}$ and $\sigma_{\min}:=\min\{1,\sigma\}$. Our main contributions for the semilinear Cauchy problem \eqref{Eq-Nonlinear-Local-Nonlocal} are three-fold as follows:
\begin{description}
	\item[Contribution 1.] We determine the critical exponent
	\begin{align}\label{critical-exponent}
		p_{\mathrm{crit}}=p_{\mathrm{crit}}(n,\sigma_{\min}):=1+\frac{2\sigma_{\min}}{n}.
	\end{align}
This critical exponent serves as a sharp threshold for the nonlinearity power $p$ such that for the sub-critical case $1<p<p_{\mathrm{crit}}$, all non-trivial weak solutions with the initial data of fixed sign in $L^1$ blow up in finite time, regardless of the size of initial data;  whereas for the super-critical case $p>p_{\mathrm{crit}}$, a unique global in-time Sobolev solution exists for sufficiently small initial data; and the critical case $p=p_{\mathrm{crit}}$ belongs to the blow-up range as well.
	\item[Contribution 2.] We establish the large-time asymptotic profile
	\begin{align*}
		u(t,\cdot)\sim G(t,\cdot)\int_{\mb{R}^n}\Big(\varepsilon u_0(x)+\varepsilon u_1(x)+\int_0^{+\infty}|u(t,x)|^p\,\mathrm{d}t\Big)\mathrm{d}x \ \ \text{when}\ \ p>p_{\mathrm{crit}}
	\end{align*}
	in the $\dot{H}^s$ norm with $s\in[0,\sigma_{\min}]$, where the dominant function, being the anomalous diffusion when $\sigma\in(0,1)$ or the classical diffusion when $\sigma\in(1,+\infty)$, is given by
	\begin{align*}
		G(t,x):=\begin{cases}
			\ml{F}_{\xi\to x}^{-1}\big(\mathrm{e}^{-b|\xi|^{2\sigma}t}\big)&\text{if}\ \  \sigma\in(0,1),\\[0.5em]
			\ml{F}_{\xi\to x}^{-1}\big(\mathrm{e}^{-a|\xi|^{2}t}\big)&\text{if}\ \  \sigma\in(1,+\infty).
		\end{cases}
	\end{align*}
	It can be understood by the Fourier multiplier $\ml{G}(t,|D|)$, namely, $\widehat{\ml{G}}(t,|\xi|)=\widehat{G}(t,\xi)$.
	As our byproduct, we prove the following sharp polynomial decay estimate:
	\begin{align*}
		\|u(t,\cdot)\|_{\dot{H}^s}\approx t^{-\frac{n+2s}{4\sigma_{\min}}}
	\end{align*}
	for large-time $t\gg1$, assuming suitable $L^1$ integrable initial data.
	\item[Contribution 3.] We derive the sharp lifespan estimates
	\begin{align}\label{Sharp-Lifespan}
		T_{\varepsilon} \begin{cases}
			=+\infty&\text{if}\ \ p>p_{\mathrm{crit}},\\
			\approx\Gamma(\varepsilon;p,\sigma_{\min})&\text{if}\ \ p\leqslant p_{\mathrm{crit}},
		\end{cases}
	\end{align}
	with the $\varepsilon$-dependent (decreasing) function
	\begin{align*}
	\Gamma(\varepsilon;p,\sigma_{\min}):=\begin{cases}
		\exp(C\varepsilon^{-(p-1)})&\text{if}\ \ p=p_{\mathrm{crit}},\\
		\varepsilon^{-\frac{2\sigma_{\min}(p-1)}{2\sigma_{\min}-n(p-1)}}&\text{if}\ \ p<p_{\mathrm{crit}},
	\end{cases}
	\end{align*}
	where $T_{\varepsilon}$ denotes the maximal time for which a unique local in-time solution exists. Here $C$ stands for a suitable positive constant independent of $\varepsilon$.
\end{description}

To contextualize our work, we first recall some known results for the damped wave equation in two well-studied limiting cases with respect to the parameters $a$ or $b$: the local case driven solely by the classical Laplacian $-\Delta$ and the general nonlocal case driven solely by the fractional Laplacian $(-\Delta)^\sigma$. Our focus will be on results for the $L^1$ integrable initial data.

For the local case $(a,b)=(1,0)$, our general model \eqref{Eq-Nonlinear-Local-Nonlocal} reduces to the following semilinear classical damped wave equation:
\begin{align}\label{Eq-Nonlinear-Damped-Wave}
	\begin{cases}
		u_{tt}-\Delta u+u_t=|u|^p,&x\in\mb{R}^n,\ t\in\mb{R}_+,\\
		u(0,x)=\varepsilon u_0(x),\ u_t(0,x)=\varepsilon u_1(x),&x\in\mb{R}^n.
	\end{cases}
\end{align}
Through Matsumura-type estimates, weighted energy methods and test function methods, the authors of \cite{Matsumura-1976,Todorova-Yordanov-2001,Zhang-2001,Ikehata-Tanizawa-2005} determined its critical exponent $p_{\mathrm{Fuj}}(n):=1+\frac{2}{n}$ (the so-called Fujita exponent arising in the semilinear diffusion equation \cite{Fujita-1966}).
Subsequently, the sharp lifespan estimates
\begin{align}\label{Life_span}
	T_{\varepsilon}\begin{cases}
		=+\infty &\text{if}\ \ p>p_{\mathrm{Fuj}}(n),\\
		\approx \exp(C\varepsilon^{-(p-1)})&\text{if}\ \ p=p_{\mathrm{Fuj}}(n),\\
		\approx \varepsilon^{-\frac{2(p-1)}{2-n(p-1)}}&\text{if} \ \ p<p_{\mathrm{Fuj}}(n),
	\end{cases}
\end{align}
were derived by  \cite{LS-Li-Zhou=1995,LS-Kirane-Qafsaoui=2002,LS-Nishihara=2011,LS-Ikeda-Wakasugi=2015,LS-Ikeda-Ogawa=2016,LS-Fujiwara-Ikeda-Wakasugi=2019,LS-Ikeda-Sobajima=2019,LS-Lai-Zhou=2019} via comparison arguments, modified test function methods and spatial weighted $H^s$ estimates.

For the nonlocal case $(a,b)=(0,1)$, our general model \eqref{Eq-Nonlinear-Local-Nonlocal} reduces to the following semilinear frictional damped $\sigma$-evolution equation (only the situation $\sigma\in[1,+\infty)$ was considered in the literature):
\begin{align}\label{Eq-Nonlinear-Damped-Sigma}
	\begin{cases}
		u_{tt}+(-\Delta)^{\sigma} u+u_t=|u|^p,&x\in\mb{R}^n,\ t\in\mb{R}_+,\\
		u(0,x)=\varepsilon u_0(x),\ u_t(0,x)=\varepsilon u_1(x),&x\in\mb{R}^n,
	\end{cases}
\end{align}
whose critical exponent is determined by the so-called modified Fujita exponent $p_{\mathrm{Fuj}}(\frac{n}{\sigma}):=1+\frac{2\sigma}{n}$ by using $L^m-L^q$ estimates (with suitable values $1\leqslant m\leqslant q\leqslant +\infty$) and modified test function methods \cite{D'Abbicco-Ebert=2014, Pham-K-Reissig=2015, D'Abbicco-Ebert=2017, D'Abbicco-Fujiwara=2021, Dao-Reissig=2021-01}. Recently, the sharp lifespan estimates
\begin{align}\label{Life_span-2}
	T_{\varepsilon}\begin{cases}
		=+\infty &\text{if}\ \ p>p_{\mathrm{Fuj}}(\frac{n}{\sigma}),\\
		\approx \exp(C\varepsilon^{-(p-1)})&\text{if}\ \ p=p_{\mathrm{Fuj}}(\frac{n}{\sigma}),\\
		\approx \varepsilon^{-\frac{2\sigma(p-1)}{2\sigma-n(p-1)}}&\text{if} \ \ p<p_{\mathrm{Fuj}}(\frac{n}{\sigma}),
	\end{cases}
\end{align}
were completely demonstrated by introducing a new test function in \cite{Chen-Girardi=2025}. Notably, in the limit case $\sigma = 1$, these results  exactly coincide with those for the classical damped wave equation \eqref{Eq-Nonlinear-Damped-Wave}.

This paper is devoted to the analysis of the mixed local-nonlocal problem \eqref{Eq-Nonlinear-Local-Nonlocal} that interpolates between the purely local model \eqref{Eq-Nonlinear-Damped-Wave} and the purely nonlocal model \eqref{Eq-Nonlinear-Damped-Sigma}. Then, a fundamental and natural question arises.
\begin{center}
\textit{How do the competing effects of a local operator $-a\Delta$ versus a nonlocal operator $b(-\Delta)^\sigma$\\ determine qualitative properties of solutions?}
\end{center}
We seek to solve this question by providing a complete description of solutions' sharp behavior, encompassing global in-time properties (existence, regularities, decay rates, asymptotic profiles) and finite-time blow-up phenomena (critical exponents, lifespan estimates).
\medskip

\noindent\textbf{\large Notation. }Let the generic positive constants $c$ and $C$, which are independent of $t$ and $\varepsilon$, vary from line to line. The notation $f\lesssim g$ means $f\leqslant Cg$ for  $C>0$; and $f\gtrsim g$ is defined similarly. We write $f\approx g$ to denote a sharp estimate, meaning that $g\lesssim f\lesssim g$ holds. We introduce the regions
\begin{align*}
	\ml{Z}_{\intt}(\varepsilon_0):=\{|\xi|\leqslant\varepsilon_0\ll1\},\  \
	\ml{Z}_{\bdd}(\varepsilon_0,N_0):=\{\varepsilon_0\leqslant |\xi|\leqslant N_0\},\ \  
	\ml{Z}_{\extt}(N_0):=\{|\xi|\geqslant N_0\gg1\}.
\end{align*}
Moreover, we define the smooth cut-off functions $\chi_{\intt}(\xi),\chi_{\bdd}(\xi),\chi_{\extt}(\xi)$ having their supports in the corresponding regions $\ml{Z}_{\intt}(\varepsilon_0)$, $\ml{Z}_{\bdd}(\varepsilon_0/2,2N_0)$ and $\ml{Z}_{\extt}(N_0)$, respectively, which form the partition of unity $\chi_{\bdd}(\xi)=1-\chi_{\intt}(\xi)-\chi_{\extt}(\xi)$. For $s \in \mathbb{R}$,  $|D|^s$ is the Fourier multiplier operator   defined by $\mathcal{F}(|D|^s f) = |\xi|^s \mathcal{F}(f)$ for $f\in\ml{S}$, where $\mathcal{F}$ denotes the Fourier transform. The fractional Laplacian is defined via $(-\Delta)^{s}:=|D|^{2s}$ for $s\in\mb{R}$. For integrable functions $f$, we define the averages
\begin{align*}
P_f:=\int_{\mb{R}^n}f(x)\,\mathrm{d}x\ \ \text{and}\ \  \ml{P}_f:=\int_0^{+\infty}\int_{\mb{R}^n}f(t,x)\,\mathrm{d}x\,\mathrm{d}t. 
\end{align*}
For the sake of convenience, we introduce the combined initial data
\begin{align*}
u_{0+1}(x):=u_0(x)+u_1(x).
\end{align*}
We denote by $p':=\frac{p}{p-1}$, the H\"older conjugate of $p$ and by $\langle x\rangle:=\sqrt{1+|x|^2}$, the Japanese bracket.

\section{Main results}\setcounter{equation}{0}\label{Section-Main-Results}
\subsection{Critical exponent and large-time behavior}
\hspace{5mm}We now present our first main result concerning the global in-time existence of small data Sobolev solutions in the super-critical regime $p>p_{\mathrm{crit}}$.
\begin{theorem}\label{Thm-GESDS}
Let $(u_0,u_1)\in (H^{\sigma_{\min}}\cap L^1)\times (L^2\cap L^1)$. Assume $p\geqslant 2$, $p\leqslant\frac{n}{n-2\sigma_{\min}}$ if $n>2\sigma_{\min}$, and $p>p_{\mathrm{crit}}$.
Then, there exists $\varepsilon_1>0$ such that for any $\varepsilon\in(0,\varepsilon_1]$ the semilinear Cauchy problem \eqref{Eq-Nonlinear-Local-Nonlocal} admits a uniquely global in-time solution $u\in\ml{C}([0,+\infty),H^{\sigma_{\min}})$. Furthermore, the solution satisfies the following decay estimate for $s\in[0,\sigma_{\min}]$:
\begin{align}\label{Est-Decay}
\|u(t,\cdot)\|_{\dot{H}^{s}}&\lesssim \varepsilon (1+t)^{-\frac{n+2s}{4\sigma_{\min}}}\|(u_0,u_1)\|_{(H^{\sigma_{\min}}\cap L^1)\times (L^2\cap L^1)}.
\end{align}
\end{theorem}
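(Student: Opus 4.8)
The plan is to solve \eqref{Eq-Nonlinear-Local-Nonlocal} by a contraction argument built on sharp decay estimates for the underlying linear problem, so I would first analyze the linear Cauchy problem $u_{tt}+\ml{L}_{a,b}u+u_t=0$. Passing to the Fourier side, the symbol of $\ml{L}_{a,b}$ is $\mu(\xi):=a|\xi|^2+b|\xi|^{2\sigma}$ and the characteristic roots are $\lambda_\pm(\xi)=\tfrac12\big(-1\pm\sqrt{1-4\mu(\xi)}\big)$, so the solution is $\hat u(t,\xi)=\hat K_0(t,\xi)\hat u_0(\xi)+\hat K_1(t,\xi)\hat u_1(\xi)$ with explicit multipliers built from $\lambda_\pm$. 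I would split the phase space with the cut-offs $\chi_{\intt},\chi_{\bdd},\chi_{\extt}$. In $\ml{Z}_{\intt}(\varepsilon_0)$ the dominant root expands as $\lambda_+(\xi)=-\mu(\xi)+O(\mu(\xi)^2)$; since $a|\xi|^2+b|\xi|^{2\sigma}\approx|\xi|^{2\sigma_{\min}}$ for small $|\xi|$ (the term carrying the smaller exponent wins, selecting $\sigma_{\min}$), this is precisely the diffusion phenomenon with effective symbol $|\xi|^{2\sigma_{\min}}$. A rescaling $\eta=t^{1/(2\sigma_{\min})}\xi$ in $\int_{|\xi|\le\varepsilon_0}|\xi|^{2s}\mathrm{e}^{-2\mu(\xi)t}|\widehat{u_{\mathrm{data}}}|^2\,\mathrm{d}\xi$, together with $\|\widehat{u_{\mathrm{data}}}\|_{L^\infty}\lesssim\|u_{\mathrm{data}}\|_{L^1}$, yields the polynomial rate $(1+t)^{-(n+2s)/(4\sigma_{\min})}$. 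In $\ml{Z}_{\bdd}$ and $\ml{Z}_{\extt}$, $\mathrm{Re}\,\lambda_\pm\le -c<0$ gives exponential decay; in the oscillatory high-frequency zone the $\hat K_1$-multiplier carries a gain of order $|\xi|^{-\sigma_{\max}}$, so the $u_1$-part is controlled in $\dot H^s$ by $\|u_1\|_{L^2}$ for $s\le\sigma_{\min}\le\sigma_{\max}$, while the $u_0$-part needs $\|u_0\|_{\dot H^s}$ with $s\le\sigma_{\min}$. This produces the linear version of \eqref{Est-Decay}.

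Next I would set up the fixed point. Writing the mild solution through Duhamel's principle as
\[
u(t)=\varepsilon K_0(t)\ast u_0+\varepsilon K_1(t)\ast u_1+\int_0^t K_1(t-\tau)\ast|u(\tau)|^p\,\mathrm{d}\tau=:N[u](t),
\]
I would work in $X(T):=\ml{C}([0,T],H^{\sigma_{\min}})$ equipped with the weighted norm
\[
\|u\|_{X(T)}:=\sup_{0\le t\le T}\Big[(1+t)^{\frac{n}{4\sigma_{\min}}}\|u(t)\|_{L^2}+(1+t)^{\frac{n+2\sigma_{\min}}{4\sigma_{\min}}}\|u(t)\|_{\dot H^{\sigma_{\min}}}\Big],
\]
so that the linear estimate reads $\|u^{\mathrm{lin}}\|_{X(T)}\lesssim\varepsilon\|(u_0,u_1)\|_{(H^{\sigma_{\min}}\cap L^1)\times(L^2\cap L^1)}$; the intermediate rates for $s\in(0,\sigma_{\min})$ follow by interpolation. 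The crux is the nonlinear bound $\|N[u]\|_{X(T)}\lesssim\varepsilon\|(u_0,u_1)\|+\|u\|_{X(T)}^p$ together with the matching Lipschitz estimate, after which a Banach fixed-point argument on a small ball (legitimate with $T=+\infty$ once the estimates are uniform) yields the global solution and the decay \eqref{Est-Decay} at once.

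For the nonlinear term I would apply the linear estimate to $K_1(t-\tau)\ast|u(\tau)|^p$ in the schematic form $\|K_1(t-\tau)\ast g\|_{\dot H^s}\lesssim(1+t-\tau)^{-\frac{n+2s}{4\sigma_{\min}}}\|g\|_{L^1}+\mathrm{e}^{-c(t-\tau)}\|g\|_{L^2}$, split $\int_0^t=\int_0^{t/2}+\int_{t/2}^t$, and control the source norms $\||u|^p\|_{L^1}=\|u\|_{L^p}^p$ and $\||u|^p\|_{L^2}=\|u\|_{L^{2p}}^p$ by the Gagliardo--Nirenberg inequality interpolating between $L^2$ and $\dot H^{\sigma_{\min}}$. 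Here the hypotheses are exactly what is needed: $p\ge2$ makes the interpolation exponent for $\|u\|_{L^p}$ nonnegative, while $p\le\frac{n}{n-2\sigma_{\min}}$ keeps the exponent for $\|u\|_{L^{2p}}$ at most one (that is, $2p$ below the Sobolev threshold $\frac{2n}{n-2\sigma_{\min}}$). Feeding in the $X(T)$-rates gives $\|u(\tau)\|_{L^p}^p\lesssim\|u\|_{X(T)}^p(1+\tau)^{-\frac{n(p-1)}{2\sigma_{\min}}}$, and the super-criticality $p>p_{\mathrm{crit}}$ is precisely the inequality $\frac{n(p-1)}{2\sigma_{\min}}>1$ that makes the time integral on $[0,t/2]$ converge uniformly, whereas on $[t/2,t]$ the factor $(1+\tau)\approx(1+t)$ combines with the integrable/exponential kernel to reproduce the target weight. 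The Lipschitz estimate follows along the same lines using $\big||u|^p-|v|^p\big|\lesssim(|u|^{p-1}+|v|^{p-1})|u-v|$ and Hölder's inequality.

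I expect the main obstacle to lie in the linear estimates rather than in the fixed point: one must establish the low-frequency diffusion profile for the \emph{mixed} symbol $a|\xi|^2+b|\xi|^{2\sigma}$, proving it is comparable to the single power $|\xi|^{2\sigma_{\min}}$ uniformly on $\ml{Z}_{\intt}$ so that the $\sigma_{\min}$-rate is sharp, and simultaneously handle the oscillatory high-frequency zone where the regularity budget is tight and the gain $|\xi|^{-\sigma_{\max}}$ in the $\hat K_1$-multiplier must be tracked carefully to rule out any derivative loss beyond $\sigma_{\min}$. Once these zone-by-zone bounds are in place, closing the contraction is routine given the restrictions $p\ge2$ and $p\le\frac{n}{n-2\sigma_{\min}}$.
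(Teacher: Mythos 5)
Your proposal is correct and follows essentially the same route as the paper: the same Fourier-side analysis of the characteristic roots with the low-frequency effective symbol $|\xi|^{2\sigma_{\min}}$ and exponential decay elsewhere, the same Duhamel formulation and contraction in $\mathcal{C}([0,T],H^{\sigma_{\min}})$ with the identical time-weighted norm, and the same Gagliardo--Nirenberg treatment of $\||u|^p\|_{L^1\cap L^2}$ with the integral split at $t/2$ and convergence from $p>p_{\mathrm{crit}}$. The only cosmetic difference is that on $[t/2,t]$ the paper uses the $L^2$--$L^2$ estimate with polynomial rate $(1+t-\tau)^{-s/(2\sigma_{\min})}$ rather than an exponential kernel, but the resulting bound is the same.
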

\begin{remark}\label{Rem-First-Competition}
	The regularity of global in-time solutions can be improved by assuming $u_0\in H^{\sigma_{\max}}$ additionally. Let us first replace the restriction ``$p\leqslant\frac{n}{n-2\sigma_{\min}}$ if $n>2\sigma_{\min}$'' by the weaker one ``$p\leqslant\frac{n}{n-2\sigma_{\max}}$ if $n>2\sigma_{\max}$'' in Theorem \ref{Thm-GESDS}. If $\sigma\in[\frac{1}{2},2]$, then  $u\in\ml{C}([0,+\infty),H^{\sigma_{\max}})$ and the decay estimates \eqref{Est-Decay} hold for $s\in[0,\sigma_{\max}]$. For the remaining case $\sigma\in(0,\frac{1}{2})\cup(2,+\infty)$, achieving this higher regularity requires the stronger condition $p\geqslant 1+\frac{\sigma_{\max}}{n}$, which is more restrictive than the super-critical condition $p>p_{\mathrm{crit}}$ in Theorem \ref{Thm-GESDS}. This dichotomy reveals the first competing influence of local-nonlocal operators as follows: the maximal spatial regularity of solutions is ultimately determined by the component of $\ml{L}_{a,b}$ with the larger exponent $\sigma_{\max}$.
\end{remark}

The second main result precisely describes the asymptotic profiles of global in-time Sobolev solutions and rigorously justifies the sharpness of decay rates in \eqref{Est-Decay} for large-time.
\begin{theorem}\label{Thm-Large-Time}
Assume $u_0$, $u_1$ and  $p$ satisfy the hypotheses of Theorem \ref{Thm-GESDS}.
 Then, the obtained global in-time small data Sobolev solution $u$ satisfies the following approximation:
\begin{align*}
\lim\limits_{t\to+\infty} t^{\frac{n+2s}{4\sigma_{\min}}}\big\|u(t,\cdot)-G(t,\cdot)\big(\varepsilon P_{u_{0+1}}+\ml{P}_{|u|^p}\big)\big\|_{\dot{H}^s}=0
\end{align*}
for $s\in[0,\sigma_{\min}]$. Moreover, the sharp large-time decay estimates
\begin{align*}
t^{-\frac{n+2s}{4\sigma_{\min}}}\big|\varepsilon P_{u_{0+1}}+\ml{P}_{|u|^p}\big|\lesssim\|u(t,\cdot)\|_{\dot{H}^s}\lesssim \varepsilon t^{-\frac{n+2s}{4\sigma_{\min}}}\|(u_0,u_1)\|_{(H^{\sigma_{\min}}\cap L^1)\times (L^2\cap L^1)}
\end{align*}
hold for $s\in[0,\sigma_{\min}]$, provided that $\varepsilon P_{u_{0+1}}+\ml{P}_{|u|^p}\neq0$.
\end{theorem}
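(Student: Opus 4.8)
The plan is to run a low-frequency asymptotic analysis on the Duhamel representation
\begin{align*}
u(t,\cdot)=\underbrace{\ml{E}_0(t)(\varepsilon u_0)+\ml{E}_1(t)(\varepsilon u_1)}_{=:\,u_{\lin}(t,\cdot)}+\underbrace{\int_0^t\ml{E}_1(t-\tau)\big(|u(\tau,\cdot)|^p\big)\,\mathrm{d}\tau}_{=:\,u_{\nlin}(t,\cdot)},
\end{align*}
where $\ml{E}_0(t),\ml{E}_1(t)$ are the linear propagators of \eqref{Eq-Nonlinear-Local-Nonlocal}. In the Fourier space their symbols are built from the characteristic roots
\begin{align*}
\lambda_\pm(\xi)=\tfrac{1}{2}\Big(-1\pm\sqrt{1-4(a|\xi|^2+b|\xi|^{2\sigma})}\,\Big),
\end{align*}
with $\lambda_+(\xi)\to0$ and $\lambda_-(\xi)\to-1$ as $|\xi|\to0$. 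Localizing with the partition $\chi_{\intt}(D)+\chi_{\bdd}(D)+\chi_{\extt}(D)=1$, the contributions supported in $\ml{Z}_{\bdd}(\varepsilon_0,N_0)\cup\ml{Z}_{\extt}(N_0)$ carry an exponential factor $\mathrm{e}^{-ct}$, hence are absorbed into the $o(t^{-\frac{n+2s}{4\sigma_{\min}}})$ remainder; only the interior zone $\ml{Z}_{\intt}(\varepsilon_0)$ drives the polynomial rate. I would first record the Matsumura-type bound $\|\ml{E}_j(t)f\|_{\dot H^s}\lesssim(1+t)^{-\frac{n+2s}{4\sigma_{\min}}}\|f\|_{L^1}+\mathrm{e}^{-ct}\|f\|_{H^{s}}$ to make these reductions rigorous.

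\emph{Diffusion approximation.} On $\ml{Z}_{\intt}(\varepsilon_0)$ one has $\lambda_+(\xi)=-(a|\xi|^2+b|\xi|^{2\sigma})+O\big((a|\xi|^2+b|\xi|^{2\sigma})^2\big)$, so the symbols of $\ml{E}_0(t),\ml{E}_1(t)$ both coincide with $\mathrm{e}^{-(a|\xi|^2+b|\xi|^{2\sigma})t}$ up to multipliers of strictly smaller $\dot H^s$ norm; replacing the latter by $\widehat{G}(t,\xi)$, which keeps only the $\sigma_{\min}$ term, is controlled through $|\mathrm{e}^{-y}(\mathrm{e}^{-z}-1)|\le z\,\mathrm{e}^{-y}$ applied to the subdominant symbol. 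A rescaling $\eta=t^{1/(2\sigma_{\min})}\xi$ shows each such error gains the extra factor $t^{1-1/\sigma}$ (if $\sigma<1$) or $t^{1-\sigma}$ (if $\sigma>1$), both negative, whence $o(t^{-\frac{n+2s}{4\sigma_{\min}}})$; the same rescaling yields $\|G(t,\cdot)\|_{\dot H^s}\approx t^{-\frac{n+2s}{4\sigma_{\min}}}$.

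\emph{Mass of the linear part.} Since $u_0,u_1\in L^1$, the transforms $\widehat{u}_j$ are continuous with $\widehat{u}_j(0)=P_{u_j}$. Subtracting $\varepsilon G(t,\cdot)P_{u_{0+1}}$ and using the previous step, the change of variables $\eta=t^{1/(2\sigma_{\min})}\xi$ gives
\begin{align*}
t^{\frac{n+2s}{2\sigma_{\min}}}\big\|u_{\lin}(t,\cdot)-\varepsilon G(t,\cdot)P_{u_{0+1}}\big\|_{\dot{H}^s}^2=\varepsilon^2\int_{\mb{R}^n}|\eta|^{2s}\,\mathrm{e}^{-2\kappa|\eta|^{2\sigma_{\min}}}\,\big|\widehat{u}_{0+1}\big(t^{-\frac{1}{2\sigma_{\min}}}\eta\big)-P_{u_{0+1}}\big|^2\,\mathrm{d}\eta+o(1),
\end{align*}
with $\kappa=b$ if $\sigma<1$ and $\kappa=a$ if $\sigma>1$. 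The integral tends to $0$ by dominated convergence (the integrand vanishes pointwise and is dominated by $C|\eta|^{2s}\mathrm{e}^{-2\kappa|\eta|^{2\sigma_{\min}}}$), hence $t^{\frac{n+2s}{4\sigma_{\min}}}\|u_{\lin}(t,\cdot)-\varepsilon G(t,\cdot)P_{u_{0+1}}\|_{\dot{H}^s}\to0$. This explains why the statement is a limit (merely $o(1)$) rather than a decay rate: no moment condition on the data is needed.

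\emph{Memory term and sharpness.} First I check $\ml{P}_{|u|^p}<\infty$: Gagliardo--Nirenberg together with \eqref{Est-Decay} gives $\||u(\tau,\cdot)|^p\|_{L^1}=\|u(\tau,\cdot)\|_{L^p}^p\lesssim(1+\tau)^{-\frac{n(p-1)}{2\sigma_{\min}}}$, which is integrable precisely because $p>p_{\mathrm{crit}}$. Extracting the $L^1$-mass of $|u(\tau,\cdot)|^p$ (valid when $t-\tau$ is large) reduces $u_{\nlin}(t,\cdot)$ to $\int_0^tG(t-\tau,\cdot)\|u(\tau,\cdot)\|_{L^p}^p\,\mathrm{d}\tau$, whose difference with $G(t,\cdot)\ml{P}_{|u|^p}$ equals
\begin{align*}
\int_0^t\big(G(t-\tau,\cdot)-G(t,\cdot)\big)\|u(\tau,\cdot)\|_{L^p}^p\,\mathrm{d}\tau-G(t,\cdot)\int_t^{+\infty}\|u(\tau,\cdot)\|_{L^p}^p\,\mathrm{d}\tau.
\end{align*}
On $[0,t/2]$ I invoke $\|G(t-\tau,\cdot)-G(t,\cdot)\|_{\dot H^s}\lesssim\tau\,t^{-\frac{n+2s}{4\sigma_{\min}}-1}$ against the integrable weight, and the cut tail is $o(t^{-\frac{n+2s}{4\sigma_{\min}}})$. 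Combining the three steps yields the profile coefficient $\varepsilon P_{u_{0+1}}+\ml{P}_{|u|^p}$. The upper estimate is \eqref{Est-Decay}; the lower one follows from $\|G(t,\cdot)\|_{\dot H^s}\approx t^{-\frac{n+2s}{4\sigma_{\min}}}$, the assumption $\varepsilon P_{u_{0+1}}+\ml{P}_{|u|^p}\neq0$, and the reverse triangle inequality, the profile dominating the $o(\cdot)$ error for $t\gg1$. The main obstacle is precisely the near-diagonal region $\tau\approx t$ of the memory integral, where mass extraction degenerates: there one must fall back on the smoothing estimates of $\ml{E}_1(t-\tau)$ paired with the $L^1\cap L^2$ decay of $|u(\tau,\cdot)|^p$, and it is the strict inequality $p>p_{\mathrm{crit}}$ together with the range $s\in[0,\sigma_{\min}]$ that forces this contribution to be $o(t^{-\frac{n+2s}{4\sigma_{\min}}})$.
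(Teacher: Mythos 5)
Your overall architecture matches the paper's: Duhamel representation, splitting the memory integral at $t/2$, replacing $\ml{K}_1$ by $\ml{G}$ at low frequencies, extracting the spatial mass, shifting $G(t-\tau,\cdot)\to G(t,\cdot)$ by the mean value theorem in time, discarding the tail $\int_{t}^{+\infty}$ using $\ml{P}_{|u|^p}<+\infty$, handling the near-diagonal zone $\tau\in[\frac{t}{2},t]$ with the $L^2$--$L^2$ smoothing estimate (this is where $p>p_{\mathrm{crit}}$ enters), and closing the lower bound with the reverse triangle inequality against $\|G(t,\cdot)\|_{\dot H^s}\approx t^{-\frac{n+2s}{4\sigma_{\min}}}$. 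The one genuinely different ingredient is your treatment of the mass-extraction step for the linear part: you rescale $\eta=t^{1/(2\sigma_{\min})}\xi$ on the Fourier side and use continuity of $\widehat{u}_{0+1}$ at the origin plus dominated convergence, whereas the paper works in physical space, writing $G(t,\cdot-y)-G(t,\cdot)$ via the mean value theorem and splitting the $y$-integral at $|y|=h(t):=t^{\frac{1}{4\sigma_{\min}}}$. Both are standard and correct; yours is arguably cleaner for the linear datum and makes transparent why no moment condition is needed.

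The one step you should tighten is the mass extraction for the source term, i.e.\ the paper's $A_3$. You assert the replacement of $\ml{G}(t-\tau,|D|)|u(\tau,\cdot)|^p$ by $G(t-\tau,\cdot)\|\,|u(\tau,\cdot)|^p\|_{L^1}$ is ``valid when $t-\tau$ is large,'' but for each fixed $\tau$ this is only an $o\big((t-\tau)^{-\frac{n+2s}{4\sigma_{\min}}}\big)$ statement that is \emph{not uniform in $\tau$}: the function $|u(\tau,\cdot)|^p$ spreads out as $\tau$ grows, so its ``effective support'' competes with the diffusive length scale of $G(t-\tau,\cdot)$. After integrating over $\tau\in[0,\frac{t}{2}]$ you cannot simply sum pointwise $o(\cdot)$ bounds. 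The paper resolves this by splitting the convolution variable at $|y|=h(\tau)$, which produces the two terms $t^{-\frac{1}{4\sigma_{\min}}}\int_0^{t/2}\|\,|u(\tau,\cdot)|^p\|_{L^1}\,\mathrm{d}\tau$ and $\int_0^{+\infty}\|\,|u(\tau,\cdot)|^p\|_{L^1(|x|\geqslant h(\tau))}\,\mathrm{d}\tau$, the first decaying by the finiteness of $\ml{P}_{|u|^p}$ and the second vanishing by dominated convergence in $\tau$ using that same finiteness. Your dominated-convergence mechanism can be adapted to do this, but it must be applied to the double integral in $(\tau,\eta)$ with the integrable majorant $\|\,|u(\tau,\cdot)|^p\|_{L^1}$, not term by term in $\tau$; as written this is a gap in the sketch rather than a wrong idea. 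A minor further imprecision: in the time-shift term the weight $\tau\,\|u(\tau,\cdot)\|_{L^p}^p$ is not necessarily integrable; what saves you is that $t^{-1}\int_0^{t/2}(1+\tau)^{1-\frac{n(p-1)}{2\sigma_{\min}}}\,\mathrm{d}\tau\to0$ in all three sub-cases, exactly as in the paper's estimate of $A_2$.
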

\begin{remark}\label{Rem-Second-Competition}
These dichotomies in $G(t,x)$ and $t^{-\frac{n+2s}{4\sigma_{\min}}}$ reveal the second competing influence of local-nonlocal operators as follows: the large-time asymptotic behavior (profile and sharp decay rate) of solutions are governed by the component of $\ml{L}_{a,b}$ with the smaller exponent $\sigma_{\min}$.
\end{remark}

Let us turn our consideration to the blow-up phenomena in the remaining regime $p\leqslant p_{\mathrm{crit}}$.
\begin{theorem}\label{Thm-Blow-up}
Let $u_{0+1}\in L^1$ such that $\int_{\mb{R}^n}u_{0+1}(x)\,\mathrm{d}x>0$.
Assume $p\leqslant p_{\mathrm{crit}}$.
Then, there is no global in-time solution $u \in \mathcal{C}([0,+\infty),L^2)$ to the semilinear Cauchy problem \eqref{Eq-Nonlinear-Local-Nonlocal}. Furthermore, its lifespan $T_{\varepsilon}$ is bounded from above by
	\begin{align}\label{Est-upper}
		T_{\varepsilon}\lesssim \Gamma(\varepsilon;p,\sigma_{\min}).
	\end{align}
\end{theorem}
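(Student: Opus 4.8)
\section{Proof proposal for Theorem \ref{Thm-Blow-up}}

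The plan is to establish nonexistence of global solutions together with the sharp upper bound on the lifespan by a (modified) test function argument in the spirit of Mitidieri--Pohozaev, adapted to the mixed operator $\ml{L}_{a,b}$. I would begin from the weak formulation of \eqref{Eq-Nonlinear-Local-Nonlocal}: for a test function $\psi=\psi(t,x)$ compactly supported in time on $[0,T]$ with $\psi(T,\cdot)=\psi_t(T,\cdot)=0$ and $\psi_t(0,\cdot)=0$, integration by parts (using the self-adjointness of $\ml{L}_{a,b}$) yields the identity
\[
\int_0^T\!\!\int_{\mb{R}^n}|u|^p\psi\,\mathrm{d}x\,\mathrm{d}t+\varepsilon\int_{\mb{R}^n}u_{0+1}(x)\,\psi(0,x)\,\mathrm{d}x=\int_0^T\!\!\int_{\mb{R}^n}u\big(\psi_{tt}-\psi_t+\ml{L}_{a,b}\psi\big)\,\mathrm{d}x\,\mathrm{d}t.
\]
Since $\int_{\mb{R}^n}u_{0+1}\,\mathrm{d}x>0$, choosing $\psi(0,\cdot)\geqslant0$ with $\psi(0,\cdot)\to1$ keeps the data term positive and bounded below by $c\varepsilon$ for large scales. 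Applying H\"older's inequality with exponents $p$ and $p'$ to the right-hand side and absorbing the $|u|^p$-factor by Young's inequality, I would reduce everything to controlling the weighted remainder $K:=\big(\int_0^T\!\int|\psi_{tt}-\psi_t+\ml{L}_{a,b}\psi|^{p'}\psi^{-(p'-1)}\big)^{1/p'}$, which then bounds both $\int_0^T\!\int|u|^p\psi$ and $\varepsilon\int u_{0+1}\psi(0,\cdot)$ from above.

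The decisive step is the choice of test function and the estimate of $K$. I would take $\psi(t,x)=\eta(t/T)\,\phi(x/R)$ with a smooth decreasing temporal cut-off $\eta$ (so that $\eta(0)=1$, $\eta'(0)=0$, which annihilates the boundary term in $u_0$) and a spatial profile $\phi$ with slow polynomial decay, e.g.\ $\phi(x)=\langle x\rangle^{-\ell}$, chosen so that $\phi$ \emph{never} vanishes. This global positivity is essential: the nonlocal term $(-\Delta)^\sigma\phi_R$ is not compactly supported, so a cut-off profile would make the quotient $|\ml{L}_{a,b}\psi|^{p'}\psi^{-(p'-1)}$ diverge. Using the exact scalings $(-\Delta)^\sigma[\phi(\cdot/R)]=R^{-2\sigma}[(-\Delta)^\sigma\phi](\cdot/R)$ and $-a\Delta[\phi(\cdot/R)]=aR^{-2}[-\Delta\phi](\cdot/R)$, the spatial action splits into contributions of orders $R^{-2}$ and $R^{-2\sigma}$, whose dominant part for $R\gg1$ is of order $R^{-2\sigma_{\min}}$. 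Balancing the temporal scale via $T\approx R^{2\sigma_{\min}}$ puts $\psi_t\sim T^{-1}$ and the $\sigma_{\min}$-part of $\ml{L}_{a,b}\psi$ at the same order, while $\psi_{tt}\sim T^{-2}$ is subordinate; integrating over the effective support of measure $\approx T\cdot R^n\approx T^{1+n/(2\sigma_{\min})}$ then gives $K^{p'}\approx T^{1+n/(2\sigma_{\min})-p'}$.

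With this estimate the subcritical case $1<p<p_{\mathrm{crit}}$ follows at once: since $p<p_{\mathrm{crit}}$ is equivalent to $p'>1+\tfrac{n}{2\sigma_{\min}}$, the exponent $1+\tfrac{n}{2\sigma_{\min}}-p'$ is negative, so $K^{p'}\to0$ as $T\to\infty$, contradicting $\varepsilon\int u_{0+1}\psi(0,\cdot)\gtrsim\varepsilon>0$; matching the two sides at the threshold yields the sharp bound $T_\varepsilon\lesssim\varepsilon^{-2\sigma_{\min}(p-1)/(2\sigma_{\min}-n(p-1))}$. At $p=p_{\mathrm{crit}}$ the scaling is exactly balanced, $K^{p'}\approx\mathrm{const}$, so the elementary contradiction fails and a refinement is required: I would either insert a logarithmic correction into the spatial profile or run a slicing/iteration argument on $T\mapsto\int_0^T\!\int|u|^p\psi$, extracting the logarithmic gain to obtain the exponential lifespan $T_\varepsilon\lesssim\exp(C\varepsilon^{-(p-1)})$.

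The main obstacle I anticipate is twofold, and both aspects stem from the nonlocal term. First, one must control $(-\Delta)^\sigma\phi_R$ globally, including its slowly decaying tails, with the correct scaling and integrability; this forces the use of a strictly positive, polynomially decaying test function together with sharp pointwise bounds on $(-\Delta)^\sigma\langle x\rangle^{-\ell}$, and the admissible range of $\ell$ must be reconciled with the H\"older exponent $p'$ uniformly for all $\sigma\in(0,1)\cup(1,+\infty)$ (the regime $\sigma>1$ needing extra care, as $(-\Delta)^\sigma$ is then of higher order than the dominant local part $-a\Delta$). Second, the critical case is genuinely more delicate, since the borderline scaling produces no decay and the logarithmic improvement must be quantified precisely to pin down the exponential lifespan; verifying this refined constant structure is where most of the technical effort will lie.
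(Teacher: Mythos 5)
Your overall strategy coincides with the paper's: a Mitidieri--Pohozaev-type test function argument with a strictly positive, polynomially decaying spatial profile (needed because $(-\Delta)^\sigma$ destroys compact supports), the parabolic scaling $T\approx R^{2\sigma_{\min}}$ so that the dominant spatial contribution $R^{-2\sigma_{\min}}$ matches $\psi_t\sim T^{-1}$, and H\"older/Young absorption leading to $J^{1/p'}\lesssim T^{\frac{1}{p'}(1+\frac{n}{2\sigma_{\min}}-p')}$. Your sub-critical analysis, including the lifespan bound obtained from $c\varepsilon\leqslant CJ^{1/p}T^{\alpha}-J\leqslant (CT^{\alpha})^{p'}$, is exactly the paper's argument (the paper takes $\varphi(x)=\langle x\rangle^{-n-2\sigma_0}$ with $\sigma_0$ the fractional part of $\sigma$, precisely so that $|(-\Delta)^\sigma\varphi|\lesssim\varphi$, which is the pointwise bound you correctly flag as necessary).

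The genuine gap is the critical case, which you leave as a choice between two unexecuted strategies (``logarithmic correction \emph{or} slicing/iteration''). Neither the nonexistence at $p=p_{\mathrm{crit}}$ nor the bound $T_\varepsilon\lesssim\exp(C\varepsilon^{-(p-1)})$ is actually established in your proposal. Moreover, your setup as written obstructs the cleanest fix: by merging $\psi_{tt}$, $\psi_t$ and $\ml{L}_{a,b}\psi$ into a single H\"older quotient $K$, you discard the key structural fact that the time-derivative terms are supported only in the annulus $t\in[T/2,T]$ and are therefore controlled by $\tilde J_R:=\int_{T/2}^{T}\!\int|u|^p\phi_R$ rather than by the full $J_R$. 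The paper's critical argument lives entirely on this distinction: it introduces a second spatial scale $K$ (test function $\varphi(K^{-1}R^{-1}x)$), obtaining
\begin{align*}
J_R+\int_{\mb{R}^n}u_{0+1}\varphi_R\,\mathrm{d}x\lesssim \tilde J_R^{1/p}K^{n/p'}+J_R^{1/p}K^{n/p'-2\sigma_{\min}}
\end{align*}
at the critical exponent; setting $K=1$ first shows $J_R$ is uniformly bounded, hence $\tilde J_R\to0$ as $R\to+\infty$ (tail of a convergent integral), and then letting $R\to+\infty$ followed by $K\to+\infty$ kills both right-hand terms (the second because $\frac{n}{p'}-2\sigma_{\min}<0$), forcing $\int u_{0+1}=0$, a contradiction. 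You would need either to adopt this two-parameter device or to genuinely carry out one of your proposed refinements; as it stands the critical case (and hence the exponential upper bound on the lifespan, which the paper itself derives by the quantitative version of this scheme following Chen--Girardi) is asserted rather than proved.
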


\begin{remark}\label{Rem-Third-Competition}
The results of Theorem \ref{Thm-GESDS} (global in-time existence for $p > p_{\mathrm{crit}}$) and Theorem \ref{Thm-Blow-up} (finite-time blow-up for $p \leqslant p_{\mathrm{crit}}$) collectively demonstrate the critical exponent \eqref{critical-exponent} for the $L^1$ integrable initial data, which can be equivalently expressed as the Fujita-type exponent in the fractional spatial dimension $\frac{n}{\sigma_{\min}}$, namely, $p_{\mathrm{crit}}=p_{\mathrm{Fuj}}(\frac{n}{\sigma_{\min}})$. This coincides with the critical exponents for \eqref{Eq-Nonlinear-Damped-Sigma} if $\sigma\in(0,1)$, and for \eqref{Eq-Nonlinear-Damped-Wave} if $\sigma\in(1,+\infty)$. This dichotomy reveals the third competing influence of local-nonlocal operators as follows: the critical exponent is determined solely by the component of $\ml{L}_{a,b}$ with the smaller exponent $\sigma_{\min}$.
\end{remark}

\subsection{Sharp lifespan estimates}
\hspace{5mm}As established in Theorem \ref{Thm-Blow-up}, every non-trivial local in-time solution to \eqref{Eq-Nonlinear-Local-Nonlocal} with $1<p\leqslant p_{\mathrm{crit}}$ will blow up in finite time. Moreover, the sharp upper bound estimates of $T_{\varepsilon}$ have been shown. We now complement this result by deriving lower bound estimates for the lifespan $T_{\varepsilon}$.
\begin{theorem}\label{Thm-Lower-Lifespan}
Let $(u_0,u_1)\in (H^{\sigma_{\min}}\cap L^1)\times (L^2\cap L^1)$. Assume $p\geqslant 2$, $p\leqslant\frac{n}{n-2\sigma_{\min}}$ if $n>2\sigma_{\min}$. Then, the semilinear Cauchy problem \eqref{Eq-Nonlinear-Local-Nonlocal} admits a uniquely local in-time solution $u\in\ml{C}([0,T],H^{\sigma_{\min}})$ for any $\varepsilon>0$, satisfying the estimate \eqref{Est-Decay} for any $t\in[0,T]$, where $T\lesssim \Gamma(\varepsilon;p,\sigma_{\min})$.
That is to say, its lifespan $T_{\varepsilon}$ is bounded from below by
\begin{align}\label{Est-lower}
	T_{\varepsilon}\gtrsim\Gamma(\varepsilon;p,\sigma_{\min}).
\end{align}
\end{theorem}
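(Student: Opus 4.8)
The plan is to establish local well-posedness via a contraction argument on a time-weighted function space over a finite interval $[0,T]$, and then to optimize the length $T$ of this interval as a function of $\varepsilon$. Denoting by $E_0(t,|D|)$ and $E_1(t,|D|)$ the Fourier multipliers associated with the linear propagators of $u_{tt}+\ml{L}_{a,b}u+u_t=0$, I would write the mild formulation
$$u(t,\cdot)=\varepsilon E_0(t,|D|)u_0+\varepsilon E_1(t,|D|)u_1+\int_0^t E_1(t-\tau,|D|)|u(\tau,\cdot)|^p\,\mathrm{d}\tau=:N[u](t,\cdot),$$
and introduce the evolution space $X(T)$ equipped with
$$\|u\|_{X(T)}:=\sup_{t\in[0,T]}\Big((1+t)^{\frac{n}{4\sigma_{\min}}}\|u(t,\cdot)\|_{L^2}+(1+t)^{\frac{n+2\sigma_{\min}}{4\sigma_{\min}}}\|u(t,\cdot)\|_{\dot{H}^{\sigma_{\min}}}\Big),$$
whose weights are dictated precisely by the decay rates in \eqref{Est-Decay}. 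The goal is to show that $N$ maps a small ball of $X(T)$ into itself and is a contraction there, with the crucial difference from the super-critical setting being that the controlling constant now grows with $T$.

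For the linear part I would invoke the same $L^1\cap L^2\to\dot{H}^s$ decay estimates for $E_0,E_1$ used in the proof of Theorem \ref{Thm-GESDS}, giving $\|\varepsilon E_0 u_0+\varepsilon E_1 u_1\|_{X(T)}\lesssim\varepsilon\|(u_0,u_1)\|_{(H^{\sigma_{\min}}\cap L^1)\times(L^2\cap L^1)}$ uniformly in $T$. The nonlinear part is the heart of the matter: applying the linear estimate to the source term requires controlling $\||u(\tau)|^p\|_{L^1\cap L^2}=\|u(\tau)\|_{L^p}^p+\|u(\tau)\|_{L^{2p}}^p$. The Gagliardo--Nirenberg inequality (whose validity for the $L^{2p}$ norm forces exactly the hypothesis $p\leqslant\frac{n}{n-2\sigma_{\min}}$ when $n>2\sigma_{\min}$) interpolates these between $\|u\|_{L^2}$ and $\|u\|_{\dot{H}^{\sigma_{\min}}}$, and after inserting the weights of $X(T)$ one obtains $\||u(\tau)|^p\|_{L^1}\lesssim(1+\tau)^{-\frac{n(p-1)}{2\sigma_{\min}}}\|u\|_{X(T)}^p$ together with a faster-decaying bound for the $L^2$ norm. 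The corresponding difference estimate for $\|N[u]-N[v]\|_{X(T)}$ uses $\big||u|^p-|v|^p\big|\lesssim(|u|^{p-1}+|v|^{p-1})|u-v|$ combined with a fractional chain rule to handle the $\dot{H}^{\sigma_{\min}}$ component, which is exactly where the assumption $p\geqslant 2$ is needed.

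It remains to track the $T$-dependence of the time integral. Feeding the above into the Duhamel term and multiplying by the weight $(1+t)^{\frac{n+2s}{4\sigma_{\min}}}$ reduces everything to estimating $\int_0^t(1+t-\tau)^{-\frac{n+2s}{4\sigma_{\min}}}(1+\tau)^{-\frac{n(p-1)}{2\sigma_{\min}}}\,\mathrm{d}\tau$ for $s\in\{0,\sigma_{\min}\}$. Since $p=p_{\mathrm{crit}}$ is precisely the threshold at which $\frac{n(p-1)}{2\sigma_{\min}}=1$, this integral behaves like $\log(1+T)$ when $p=p_{\mathrm{crit}}$ and like $(1+T)^{1-\frac{n(p-1)}{2\sigma_{\min}}}$ when $p<p_{\mathrm{crit}}$; denote this growth by $D(T)$. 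Thus $\|N[u]\|_{X(T)}\lesssim\varepsilon\|(u_0,u_1)\|_{(H^{\sigma_{\min}}\cap L^1)\times(L^2\cap L^1)}+D(T)\|u\|_{X(T)}^p$, and the contraction closes on a ball of radius $R\approx\varepsilon$ as soon as $D(T)R^{p-1}\lesssim 1$, i.e. $D(T)\lesssim\varepsilon^{-(p-1)}$. Solving this inequality for the largest admissible $T$ yields $T\approx\exp(C\varepsilon^{-(p-1)})$ in the critical case and $T\approx\varepsilon^{-\frac{2\sigma_{\min}(p-1)}{2\sigma_{\min}-n(p-1)}}$ in the sub-critical case, which are exactly $\Gamma(\varepsilon;p,\sigma_{\min})$ and establish \eqref{Est-lower}. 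I expect the main obstacle to be making the $T$-dependence of $D(T)$ sharp — in particular, verifying that the Gagliardo--Nirenberg exponents reproduce the exponent $\frac{n(p-1)}{2\sigma_{\min}}$ with no loss, so that the logarithmic/polynomial dichotomy sits exactly at $p_{\mathrm{crit}}$ and the resulting lower bound matches, up to constants, the upper bound of Theorem \ref{Thm-Blow-up}.
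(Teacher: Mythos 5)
Your proposal is correct and follows essentially the same route as the paper: a Banach fixed-point argument in the same time-weighted space $X_T$, with the fractional Gagliardo--Nirenberg interpolation producing the source decay $(1+\tau)^{-\frac{n(p-1)}{2\sigma_{\min}}}$ and a $T$-dependent contraction constant (your $D(T)$, the paper's $C_1(T)$) whose logarithmic/polynomial growth is balanced against $\varepsilon^{-(p-1)}$ to extract $\Gamma(\varepsilon;p,\sigma_{\min})$. The only cosmetic difference is that the paper obtains the $\dot H^{\sigma_{\min}}$ component of the Duhamel term from the smoothing of the propagator $\ml{K}_1$ applied to $|u|^p\in L^2\cap L^1$ rather than via a fractional chain rule, with $p\geqslant 2$ entering through the Gagliardo--Nirenberg exponent $\beta_0\geqslant 0$ for the $L^1$ norm of $|u|^p$; this does not change the argument.
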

\begin{remark}\label{Rem-Fourth-Competition}
Combining the upper bound \eqref{Est-upper} with the lower bound \eqref{Est-lower} above, we conclude the sharp lifespan estimates \eqref{Sharp-Lifespan}. These coincide with the sharp lifespan estimates \eqref{Life_span-2} if $\sigma\in(0,1)$, and \eqref{Life_span} if $\sigma\in(1,+\infty)$. This dichotomy reveals the fourth competing influence of local-nonlocal operators as follows: the sharp lifespan estimates are governed by the component of $\ml{L}_{a,b}$ with the smaller exponent $\sigma_{\min}$.
\end{remark}

\section{Linearized damped wave equations    driven by $\mathcal{L}_{a,b}$}\setcounter{equation}{0}\label{Section-Linearized}
\hspace{5mm}As a crucial preparation for analyzing global in-time behavior (including existence, sharp decay estimate and asymptotic profile) of solutions to the semilinear model \eqref{Eq-Nonlinear-Local-Nonlocal}, we first study its corresponding linearized Cauchy problem
\begin{align}\label{Eq-Linear-Local-Nonlocal}
\begin{cases}
v_{tt}+\ml{L}_{a,b}v+v_t=0,&x\in\mb{R}^n,\ t\in\mb{R}_+,\\
v(0,x)= v_0(x),\ v_t(0,x)= v_1(x),&x\in\mb{R}^n.
\end{cases}
\end{align}
Our main tools are the WKB analysis as well as the Fourier analysis.

\subsection{Refined estimates of solutions in the Fourier space}
\hspace{5mm}Thanks to $\ml{F}_{x\to\xi}(\ml{L}_{a,b}v)=(a|\xi|^2+b|\xi|^{2\sigma})\widehat{v}$, we can apply the partial Fourier transform with respect to $x\in\mb{R}^n$ to the linearized Cauchy problem \eqref{Eq-Linear-Local-Nonlocal} as usual. Then, its characteristic equation $\lambda^2+\lambda+(a|\xi|^2+b|\xi|^{2\sigma})=0$ has the following two roots:
\begin{align*}
\lambda_{\pm}=\frac{1}{2}\big(-1\pm\sqrt{1-4(a|\xi|^2+b|\xi|^{2\sigma})}\,\big). 
\end{align*}
\underline{Small Frequencies:} For $\xi\in\ml{Z}_{\intt}(\varepsilon_0)$, the characteristic roots are expanded by 
	\begin{align*}
	\lambda_+=\begin{cases}
		-b|\xi|^{2\sigma}+O(|\xi|^{\min\{2,4\sigma\}})&\text{if}\ \ \sigma\in(0,1),\\
		-a|\xi|^2+O(|\xi|^{\min\{4,2\sigma\}})&\text{if}\ \ \sigma\in(1,+\infty),
	\end{cases}
	\quad \lambda_-=\begin{cases}
		-1+O(|\xi|^{2\sigma})&\text{if}\ \ \sigma\in(0,1),\\
		-1+O(|\xi|^2)&\text{if}\ \ \sigma\in(1,+\infty).
	\end{cases}
\end{align*}
Due to the pairwise distinct roots of quadratic equation, we have the representation formula
\begin{align}\label{Rep-Fourier}
	\widehat{v}=\widehat{\ml{K}}_0(t,|\xi|)\widehat{v}_0+\widehat{\ml{K}}_1(t,|\xi|)\widehat{v}_1:=\frac{\lambda_+\mathrm{e}^{\lambda_-t}-\lambda_-\mathrm{e}^{\lambda_+t}}{\lambda_+-\lambda_-}\widehat{v}_0+\frac{\mathrm{e}^{\lambda_+t}-\mathrm{e}^{\lambda_-t}}{\lambda_+-\lambda_-}\widehat{v}_1.
\end{align}
\begin{prop}\label{Prop-Small}
Let $\xi\in\ml{Z}_{\intt}(\varepsilon_0)$. If $\sigma\in(0,1)$, then the solution satisfies
\begin{align*}
	\chi_{\intt}(\xi)|\widehat{v}|&\lesssim \chi_{\intt}(\xi)\mathrm{e}^{-c|\xi|^{2\sigma}t}(|\widehat{v}_0|+|\widehat{v}_1|),\\
	\chi_{\intt}(\xi)\big|\widehat{v}-\mathrm{e}^{-b|\xi|^{2\sigma}t}(\widehat{v}_0+\widehat{v}_1)\big|&\lesssim \chi_{\intt}(\xi)\big(\mathrm{e}^{-ct}+|\xi|^{\min\{2-2\sigma,2\sigma\}}\mathrm{e}^{-c|\xi|^{2\sigma}t}\big)(|\widehat{v}_0|+|\widehat{v}_1|).
\end{align*}
If $\sigma\in(1,+\infty)$, then the solution satisfies
\begin{align*}
	\chi_{\intt}(\xi)|\widehat{v}|&\lesssim \chi_{\intt}(\xi)\mathrm{e}^{-c|\xi|^{2}t}(|\widehat{v}_0|+|\widehat{v}_1|),\\
	\chi_{\intt}(\xi)\big|\widehat{v}-\mathrm{e}^{-a|\xi|^{2}t}(\widehat{v}_0+\widehat{v}_1)\big|&\lesssim \chi_{\intt}(\xi)\big(\mathrm{e}^{-ct}+|\xi|^{\min\{2,2\sigma-2\}}\mathrm{e}^{-c|\xi|^{2}t}\big)(|\widehat{v}_0|+|\widehat{v}_1|).
\end{align*}
\end{prop}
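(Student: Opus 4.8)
The plan is to work entirely in the Fourier space on the interior region $\ml{Z}_{\intt}(\varepsilon_0)$, treating $\sigma\in(0,1)$ in detail (the case $\sigma\in(1,+\infty)$ being verbatim after exchanging the roles of $a|\xi|^2$ and $b|\xi|^{2\sigma}$). First I would fix $\varepsilon_0$ so small that $4(a|\xi|^2+b|\xi|^{2\sigma})\leqslant\tfrac12$ on $\ml{Z}_{\intt}(\varepsilon_0)$; then the discriminant stays bounded below, $\lambda_+-\lambda_-=\sqrt{1-4\mu}\in[1/\sqrt2,1]$ with $\mu:=a|\xi|^2+b|\xi|^{2\sigma}$, and the two roots remain real and separated. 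From the elementary relations $\lambda_++\lambda_-=-1$ and $\lambda_+\lambda_-=\mu$ I record the only two facts I actually use: $\lambda_-\leqslant-c<0$ uniformly (hence $\mathrm{e}^{\lambda_-t}\lesssim\mathrm{e}^{-ct}$), and $\lambda_+=\mu/\lambda_-$, so that $\lambda_+\leqslant-c\mu$ and $\mathrm{e}^{\lambda_+t}\lesssim\mathrm{e}^{-c\mu t}\leqslant\mathrm{e}^{-cb|\xi|^{2\sigma}t}$ (using $\mu\geqslant b|\xi|^{2\sigma}$).

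For the first (pointwise) estimate, since $\lambda_+-\lambda_-\approx1$ the denominators in \eqref{Rep-Fourier} are harmless and I bound the two kernels directly: in $\widehat{\ml{K}}_0$ the term $\lambda_+\mathrm{e}^{\lambda_-t}\lesssim\mu\,\mathrm{e}^{-ct}$ and $-\lambda_-\mathrm{e}^{\lambda_+t}\lesssim\mathrm{e}^{-c\mu t}$, while in $\widehat{\ml{K}}_1$ both exponentials are $\lesssim\mathrm{e}^{-c\mu t}$. Invoking $\mathrm{e}^{-ct}\lesssim\mathrm{e}^{-c\mu t}$ (valid since $\mu\leqslant1$) collapses everything to $\mathrm{e}^{-c\mu t}\lesssim\mathrm{e}^{-c|\xi|^{2\sigma}t}$, which yields $\chi_{\intt}|\widehat v|\lesssim\chi_{\intt}\mathrm{e}^{-c|\xi|^{2\sigma}t}(|\widehat v_0|+|\widehat v_1|)$.

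For the second (comparison) estimate I set $\beta:=b|\xi|^{2\sigma}$ and $r:=\lambda_++\beta$; the expansion $\lambda_+=-\mu+O(\mu^2)$ gives $r=-a|\xi|^2+O(|\xi|^{4\sigma})$, hence $|r|\lesssim|\xi|^{\min\{2,4\sigma\}}$. Using $(\lambda_+-\lambda_-)^{-1}=1+O(\mu)=1+O(|\xi|^{2\sigma})$ and $-\lambda_-=1+O(|\xi|^{2\sigma})$ I decompose each kernel as
\[
\widehat{\ml{K}}_0=\mathrm{e}^{\lambda_+t}+O(|\xi|^{2\sigma})\mathrm{e}^{\lambda_+t}+O(\mathrm{e}^{-ct}),\qquad \widehat{\ml{K}}_1=\mathrm{e}^{\lambda_+t}+O(|\xi|^{2\sigma})\mathrm{e}^{\lambda_+t}+O(\mathrm{e}^{-ct}),
\]
so both $\widehat{\ml{K}}_j-\mathrm{e}^{-\beta t}$ reduce to the same three contributions: the genuine profile error $\mathrm{e}^{\lambda_+t}-\mathrm{e}^{-\beta t}$, the correction $O(|\xi|^{2\sigma})\mathrm{e}^{\lambda_+t}$, and $O(\mathrm{e}^{-ct})$. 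Since $\lambda_+<-\beta\leqslant-c\beta$, the mean value theorem applied to $s\mapsto\mathrm{e}^{st}$ gives $|\mathrm{e}^{\lambda_+t}-\mathrm{e}^{-\beta t}|\lesssim|r|\,t\,\mathrm{e}^{-c\beta t}$, and the key gain $t\,\mathrm{e}^{-c\beta t}\lesssim\beta^{-1}\mathrm{e}^{-c\beta t/2}\approx|\xi|^{-2\sigma}\mathrm{e}^{-c\beta t}$ converts this into $|\xi|^{\min\{2,4\sigma\}-2\sigma}\mathrm{e}^{-c\beta t}=|\xi|^{\min\{2-2\sigma,2\sigma\}}\mathrm{e}^{-c\beta t}$, precisely the claimed exponent. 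Since $|\xi|^{2\sigma}\leqslant|\xi|^{\min\{2-2\sigma,2\sigma\}}$ on $\ml{Z}_{\intt}$, the correction term is dominated by the same quantity, and collecting the $\mathrm{e}^{-ct}$ remainders completes the bound for $\chi_{\intt}|\widehat v-\mathrm{e}^{-\beta t}(\widehat v_0+\widehat v_1)|$.

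The main obstacle is this second estimate, and within it the sharp bookkeeping of the profile error: one must verify that the algebraic gain $|\xi|^{-2\sigma}$ produced by integrating $t\,\mathrm{e}^{-\beta t}$ against the size $|\xi|^{\min\{2,4\sigma\}}$ of $r$ yields exactly $\min\{2-2\sigma,2\sigma\}$, and that it is this profile error, rather than the denominator correction $O(|\xi|^{2\sigma})$, that dictates the final exponent (the two coincide only at $\sigma=\tfrac12$). The case $\sigma\in(1,+\infty)$ is identical with $\beta=a|\xi|^2$ and $r=\lambda_++a|\xi|^2=-b|\xi|^{2\sigma}+O(|\xi|^4)$ of size $|\xi|^{\min\{2\sigma,4\}}$, where the analogous gain $t\,\mathrm{e}^{-ca|\xi|^2t}\lesssim|\xi|^{-2}\mathrm{e}^{-ca|\xi|^2t}$ produces the exponent $\min\{2\sigma-2,2\}$.
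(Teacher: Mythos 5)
Your argument is correct and follows essentially the same route as the paper: the same representation formula \eqref{Rep-Fourier}, separation of the exponentially decaying $\mathrm{e}^{\lambda_-t}$ contributions, tracking of the $O(|\xi|^{2\sigma})$ (resp.\ $O(|\xi|^{2})$) corrections to the coefficients, and the mean value theorem on $\mathrm{e}^{\lambda_+t}-\mathrm{e}^{-\beta t}$ combined with the gain $t\,\mathrm{e}^{-c\beta t}\lesssim\beta^{-1}\mathrm{e}^{-c\beta t/2}$ to produce the exponent $\min\{2-2\sigma,2\sigma\}$ (resp.\ $\min\{2,2\sigma-2\}$). Your use of the Vieta relations $\lambda_++\lambda_-=-1$, $\lambda_+\lambda_-=\mu$ is a mildly cleaner bookkeeping device, but the substance of the estimates is identical to the paper's.
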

\begin{proof}
According to the representation of $\widehat{v}$ and the asymptotic expansions of $\lambda_{\pm}$, we notice the dominant part of solution, which can be rigorously proved by
\begin{align*}
	\chi_{\intt}(\xi)\Big|\widehat{v}-\frac{\mathrm{e}^{\lambda_+t}}{\lambda_+-\lambda_-}(-\lambda_-\widehat{v}_0+\widehat{v}_1)\Big|\lesssim\chi_{\intt}(\xi)(|\xi|^{2\sigma_{\min}}\mathrm{e}^{-ct}|\widehat{v}_0|+\mathrm{e}^{-ct}|\widehat{v}_1|),
\end{align*}
moreover,
\begin{align*}
	&\chi_{\intt}(\xi)\Big|\frac{\mathrm{e}^{\lambda_+t}}{\lambda_+-\lambda_-}(-\lambda_-\widehat{v}_0+\widehat{v}_1)-\mathrm{e}^{-b|\xi|^{2\sigma}t}(\widehat{v}_0+\widehat{v}_1)\Big|\\
	&\quad\lesssim \chi_{\intt}(\xi)\mathrm{e}^{-b|\xi|^{2\sigma}t}\left(\Big|\frac{\lambda_-(\mathrm{e}^{ O(|\xi|^{\min\{2,4\sigma\}})t}-1)+\lambda_+}{\lambda_+-\lambda_-}\Big||\widehat{v}_0|+\Big|\frac{\mathrm{e}^{ O(|\xi|^{\min\{2,4\sigma\}})t}-\lambda_++\lambda_-}{\lambda_+-\lambda_-}\Big||\widehat{v}_1|\right)\\
	&\quad\lesssim \chi_{\intt}(\xi)(|\xi|^{\min\{2,4\sigma\}}t+|\xi|^{2\sigma})\mathrm{e}^{-c|\xi|^{2\sigma}t}(|\widehat{v}_0|+|\widehat{v}_1|)\\
	&\quad\lesssim \chi_{\intt}(\xi)|\xi|^{\min\{2-2\sigma,2\sigma\}}\mathrm{e}^{-c|\xi|^{2\sigma}t}(|\widehat{v}_0|+|\widehat{v}_1|)
\end{align*}
if $\sigma\in(0,1)$; similarly,
\begin{align*}
	\chi_{\intt}(\xi)\Big|\frac{\mathrm{e}^{\lambda_+t}}{\lambda_+-\lambda_-}(-\lambda_-\widehat{v}_0+\widehat{v}_1)-\mathrm{e}^{-a|\xi|^{2}t}(\widehat{v}_0+\widehat{v}_1)\Big|\lesssim \chi_{\intt}(\xi)|\xi|^{\min\{2,2\sigma-2\}}\mathrm{e}^{-c|\xi|^{2}t}(|\widehat{v}_0|+|\widehat{v}_1|)
\end{align*}
if $\sigma\in(1,+\infty)$. Then, combining the last derived estimates and the triangle inequality, our desired pointwise estimates are established.
\end{proof}

\noindent\underline{Large Frequencies:} For $\xi\in\ml{Z}_{\extt}(N_0)$, the pairwise distinct characteristic roots are expanded by
	\begin{align*}
	\lambda_{\pm}=\begin{cases}
		\pm \sqrt{a} i|\xi|-\frac{1}{2}+iO(|\xi|^{2\sigma-1})&\text{if}\ \ \sigma\in(0,1),\\
		\pm \sqrt{b} i|\xi|^{\sigma}-\frac{1}{2}+iO(|\xi|^{2-\sigma})&\text{if}\ \ \sigma\in(1,+\infty).
	\end{cases}
\end{align*}
Thanks to the representation formula \eqref{Rep-Fourier}, we are able to get the next preliminary.
\begin{prop}\label{Prop-Large}
Let $\xi\in\ml{Z}_{\extt}(N_0)$. If $\sigma\in(0,1)$, then the solution satisfies
\begin{align*}
	\chi_{\extt}(\xi)|\widehat{v}|\lesssim\chi_{\extt}(\xi)\mathrm{e}^{-ct}(|\widehat{v}_0|+|\xi|^{-1}|\widehat{v}_1|).
\end{align*}
If $\sigma\in(1,+\infty)$, then the solution satisfies
\begin{align*}
\chi_{\extt}(\xi)|\widehat{v}|\lesssim\chi_{\extt}(\xi)\mathrm{e}^{-ct}(|\widehat{v}_0|+|\xi|^{-\sigma}|\widehat{v}_1|).
\end{align*}
\end{prop}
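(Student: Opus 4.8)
The plan is to read off both bounds directly from the representation formula \eqref{Rep-Fourier}, which remains valid on $\ml{Z}_{\extt}(N_0)$ since the two characteristic roots stay distinct there. First I would fix $N_0\gg1$ so large that $4(a|\xi|^2+b|\xi|^{2\sigma})>1$ holds on the whole region; as $a,b\in\mb{R}_+$, this is immediate. On this set the discriminant $1-4(a|\xi|^2+b|\xi|^{2\sigma})$ is strictly negative, so $\lambda_+$ and $\lambda_-$ form a genuine complex-conjugate pair. Rather than manipulating the asymptotic expansions term by term, I would extract the two exact facts that drive every estimate from Vieta's formulas for the monic quadratic $\lambda^2+\lambda+(a|\xi|^2+b|\xi|^{2\sigma})=0$: since $\lambda_++\lambda_-=-1$ and $\lambda_+=\overline{\lambda_-}$, the real parts satisfy $\mathrm{Re}\,\lambda_\pm=-\tfrac12$ identically; and since $\lambda_+\lambda_-=a|\xi|^2+b|\xi|^{2\sigma}$ with the roots conjugate, $|\lambda_\pm|^2=a|\xi|^2+b|\xi|^{2\sigma}$. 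In particular $|\mathrm{e}^{\lambda_\pm t}|=\mathrm{e}^{-t/2}$ uniformly in $\xi$, which supplies the decay factor $\mathrm{e}^{-ct}$ with $c=\tfrac12$.

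The next step is to pin down the size of the denominator. Writing $\lambda_+-\lambda_-=i\sqrt{4(a|\xi|^2+b|\xi|^{2\sigma})-1}$, I get $|\lambda_+-\lambda_-|=\sqrt{4(a|\xi|^2+b|\xi|^{2\sigma})-1}\approx\sqrt{a|\xi|^2+b|\xi|^{2\sigma}}=|\lambda_\pm|$ on $\ml{Z}_{\extt}(N_0)$, the $-1$ being negligible against the growing term. The only place the dichotomy in $\sigma$ enters is in comparing this quantity with a pure power of $|\xi|$: for $\sigma\in(0,1)$ the local term dominates, giving $\sqrt{a|\xi|^2+b|\xi|^{2\sigma}}\approx|\xi|$, whereas for $\sigma\in(1,+\infty)$ the nonlocal term dominates, giving $\approx|\xi|^{\sigma}$. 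This is precisely the source of the $|\xi|^{-1}$ versus $|\xi|^{-\sigma}$ split in the two claimed bounds.

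With these ingredients the estimates follow by bounding the two Fourier multipliers directly. For the $\widehat{\ml{K}}_0$-kernel, the triangle inequality gives
\begin{align*}
\chi_{\extt}(\xi)\,|\widehat{\ml{K}}_0|\leqslant\chi_{\extt}(\xi)\,\frac{|\lambda_+|+|\lambda_-|}{|\lambda_+-\lambda_-|}\,\mathrm{e}^{-t/2}\lesssim\chi_{\extt}(\xi)\,\mathrm{e}^{-t/2},
\end{align*}
since numerator and denominator are both comparable to $|\lambda_\pm|$. For the $\widehat{\ml{K}}_1$-kernel,
\begin{align*}
\chi_{\extt}(\xi)\,|\widehat{\ml{K}}_1|\leqslant\chi_{\extt}(\xi)\,\frac{2\,\mathrm{e}^{-t/2}}{|\lambda_+-\lambda_-|}\lesssim\chi_{\extt}(\xi)\,|\xi|^{-\max\{1,\sigma\}}\,\mathrm{e}^{-t/2},
\end{align*}
where the final power is $|\xi|^{-1}$ when $\sigma\in(0,1)$ and $|\xi|^{-\sigma}$ when $\sigma\in(1,+\infty)$. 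Inserting both into \eqref{Rep-Fourier} and applying the triangle inequality once more yields the two stated pointwise bounds.

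The computation is essentially routine; the only genuine care needed — and the step I would treat as the main obstacle, though it is mild — is justifying the asymptotic equivalences uniformly on the unbounded region $\ml{Z}_{\extt}(N_0)$, i.e. that the constants in $\approx$ can be chosen independent of $|\xi|\geqslant N_0$ and of $t$. This reduces to the elementary observation that $a|\xi|^2+b|\xi|^{2\sigma}\approx|\xi|^{2\max\{1,\sigma\}}$ on $|\xi|\geqslant N_0$, with both implicit constants controlled by $a$, $b$, $N_0$ and $\sigma$ alone, together with the fact that $\mathrm{Re}\,\lambda_\pm=-\tfrac12$ is exact rather than asymptotic, so that no spurious $t$-dependence enters the decay rate.
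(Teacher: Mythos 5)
Your proof is correct and follows essentially the same route as the paper, which leaves this proposition's proof implicit: it reads both bounds off the representation formula \eqref{Rep-Fourier} using that the roots are a complex-conjugate pair with $\mathrm{Re}\,\lambda_{\pm}=-\frac{1}{2}$ and $|\lambda_+-\lambda_-|\approx|\lambda_{\pm}|\approx|\xi|^{\sigma_{\max}}$ on $\ml{Z}_{\extt}(N_0)$. Your use of Vieta's formulas is just a clean, exact way of extracting the same information that the paper's stated large-frequency expansions of $\lambda_{\pm}$ encode, so there is nothing to add.
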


\medskip 
\noindent\underline{Bounded Frequencies.} For $\xi\in\ml{Z}_{\bdd}(\varepsilon_0,N_0)$, the characteristic roots fulfill $\text{Re}\,\lambda_{\pm}<0$ leading to
\begin{align*}
	\chi_{\bdd}(\xi)|\widehat{v}|\lesssim\chi_{\bdd}(\xi)\mathrm{e}^{-ct}(|\widehat{v}_0|+|\widehat{v}_1|).
\end{align*}

\subsection{Decay properties and asymptotic profiles}
\hspace{5mm}Let us recall $v_{0+1}(x):=v_0(x)+v_1(x)$. We next derive some $\dot{H}^s$ estimates of the solution.
\begin{prop}\label{Prop-Decay-Estimates}
Let $s\in[0,\sigma_{\max}]$. The solution of the Cauchy problem \eqref{Eq-Linear-Local-Nonlocal} satisfies the following upper bound estimates:
\begin{align*}
	\|v(t,\cdot)\|_{\dot{H}^s}&\lesssim(1+t)^{-\frac{n+2s}{4\sigma_{\min}}}\|(v_0,v_1)\|_{(H^s\cap L^1)\times (L^2\cap L^1)},\\
	\|v(t,\cdot)\|_{\dot{H}^s}&\lesssim(1+t)^{-\frac{s}{2\sigma_{\min}}}\|(v_0,v_1)\|_{H^s\times L^2}.
\end{align*}
Furthermore, it satisfies the refined estimate
\begin{align}\label{Eq-03}
\lim\limits_{t\to+\infty}t^{\frac{n+2s}{4\sigma_{\min}}}\|v(t,\cdot)-G(t,\cdot)P_{v_{0+1}}\|_{\dot{H}^s}=0,
\end{align}
and the lower bound estimate
\begin{align*}
\|v(t,\cdot)\|_{\dot{H}^s}\gtrsim t^{-\frac{n+2s}{4\sigma_{\min}}}|P_{v_{0+1}}|
\end{align*}
for large-time $t\gg1$, provided that $P_{v_{0+1}}\neq 0$.
\end{prop}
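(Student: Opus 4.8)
The plan is to reduce every assertion to pointwise Fourier estimates via the partition of unity $1=\chi_{\intt}+\chi_{\bdd}+\chi_{\extt}$, treating the three frequency zones separately with Propositions \ref{Prop-Small}, \ref{Prop-Large} and the bounded-frequency bound. For the two upper bounds I would first handle the small frequencies, where Proposition \ref{Prop-Small} gives $\chi_{\intt}(\xi)|\widehat v|\lesssim \chi_{\intt}(\xi)\mathrm{e}^{-c|\xi|^{2\sigma_{\min}}t}(|\widehat v_0|+|\widehat v_1|)$ uniformly in $\sigma$. For the first ($L^1$-based) estimate I bound $|\widehat v_0|+|\widehat v_1|\lesssim \|(v_0,v_1)\|_{L^1\times L^1}$ and compute $\||\xi|^s\mathrm{e}^{-c|\xi|^{2\sigma_{\min}}t}\|_{L^2(\ml{Z}_{\intt})}$; the self-similar substitution $\xi\mapsto t^{-1/(2\sigma_{\min})}\eta$ produces exactly the rate $t^{-\frac{n+2s}{4\sigma_{\min}}}$. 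For the second ($H^s\times L^2$-based) estimate I instead factor out $\sup_{\xi}|\xi|^{s}\mathrm{e}^{-c|\xi|^{2\sigma_{\min}}t}\approx t^{-\frac{s}{2\sigma_{\min}}}$, keep the residual Gaussian bounded, and pair it with $\|(|\widehat v_0|+|\widehat v_1|)\|_{L^2}$. On $\ml{Z}_{\bdd}$ and $\ml{Z}_{\extt}$ the factor $\mathrm{e}^{-ct}$ beats every polynomial rate for $t\gg1$ (for small $t$ one uses $(1+t)^{-\theta}\approx1$ with the trivial energy bound); here the hypothesis $s\le\sigma_{\max}$ is precisely what renders $|\xi|^{s-\sigma_{\max}}$ bounded on $\ml{Z}_{\extt}(N_0)$, so the $|\xi|^{-\sigma_{\max}}|\widehat v_1|$ tails of Proposition \ref{Prop-Large} are absorbed into $\|v_1\|_{L^2}$. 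Summing the three contributions yields both upper bounds.

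For the refined profile \eqref{Eq-03} I would split, on the Fourier side and using $P_{v_{0+1}}=\widehat{v_{0+1}}(0)$,
\begin{equation*}
\widehat v-\widehat G\,P_{v_{0+1}}=\big(\widehat v-\widehat G(\widehat v_0+\widehat v_1)\big)+\widehat G\big(\widehat{v_{0+1}}(\xi)-\widehat{v_{0+1}}(0)\big),
\end{equation*}
and call these Term A and Term B. Outside $\ml{Z}_{\intt}$ both are exponentially small and disappear after multiplication by $t^{\frac{n+2s}{4\sigma_{\min}}}$, so only small frequencies matter. For Term A I invoke the refined remainder of Proposition \ref{Prop-Small}, whose leading error is $|\xi|^{\delta}\mathrm{e}^{-c|\xi|^{2\sigma_{\min}}t}$ with $\delta=\min\{2-2\sigma,2\sigma\}>0$ when $\sigma\in(0,1)$ and $\delta=\min\{2,2\sigma-2\}>0$ when $\sigma\in(1,+\infty)$; the same scaling as above shows $t^{\frac{n+2s}{4\sigma_{\min}}}\,\||\xi|^{s+\delta}\mathrm{e}^{-c|\xi|^{2\sigma_{\min}}t}\|_{L^2}\lesssim t^{-\frac{\delta}{2\sigma_{\min}}}\to0$.

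For Term B, after the rescaling $\xi=t^{-1/(2\sigma_{\min})}\eta$ the prefactor $t^{\frac{n+2s}{4\sigma_{\min}}}$ cancels exactly, reducing the claim to $\big\|\,|\eta|^s\mathrm{e}^{-\beta|\eta|^{2\sigma_{\min}}}\big(\widehat{v_{0+1}}(t^{-1/(2\sigma_{\min})}\eta)-\widehat{v_{0+1}}(0)\big)\big\|_{L^2}\to0$, where $\beta\in\{a,b\}$ is the coefficient appearing in $\widehat G$. Since $v_{0+1}\in L^1$ its Fourier transform is continuous, so the integrand tends to $0$ pointwise and is dominated by $2\|\widehat{v_{0+1}}\|_{L^\infty}\,|\eta|^s\mathrm{e}^{-\beta|\eta|^{2\sigma_{\min}}}\in L^2$; dominated convergence closes Term B and hence \eqref{Eq-03}. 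The lower bound then follows by the reverse triangle inequality $\|v\|_{\dot H^s}\ge \|G(t,\cdot)\|_{\dot H^s}\,|P_{v_{0+1}}|-\|v-G\,P_{v_{0+1}}\|_{\dot H^s}$: an exact Gaussian-type computation gives $\|G(t,\cdot)\|_{\dot H^s}=c_0\,t^{-\frac{n+2s}{4\sigma_{\min}}}$ with $c_0>0$, while \eqref{Eq-03} makes the second term $o(t^{-\frac{n+2s}{4\sigma_{\min}}})$, so for $t\gg1$ the first term dominates and produces $\|v\|_{\dot H^s}\gtrsim t^{-\frac{n+2s}{4\sigma_{\min}}}|P_{v_{0+1}}|$ whenever $P_{v_{0+1}}\neq0$.

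\textbf{The main obstacle} is the profile \eqref{Eq-03}, since the zonal bookkeeping for the upper bounds and the triangle-inequality step for the lower bound are essentially routine once the scaling is set up. The delicate point is that the error in Term A must carry a \emph{strictly positive} power $\delta$ of $|\xi|$, so that the critical rate $t^{-\frac{n+2s}{4\sigma_{\min}}}$ is genuinely beaten; this is exactly the content of the refined remainder in Proposition \ref{Prop-Small}, and it is the reason the regimes $\sigma\in(0,1)$ and $\sigma\in(1,+\infty)$ must be separated. For Term B the only care needed is the uniform-in-$t$ $L^2$-integrability of the dominating function, which rests on the exact cancellation of the time-prefactor under the self-similar change of variables together with the continuity of $\widehat{v_{0+1}}$ guaranteed by $v_{0+1}\in L^1$.
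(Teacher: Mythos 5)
Your proposal is correct, and for most of the statement it follows the paper's own route: the two upper bounds are obtained exactly as in the paper (zonal splitting, the self-similar computation of $\big\|\chi_{\intt}(\xi)|\xi|^s\mathrm{e}^{-c|\xi|^{2\sigma_{\min}}t}\big\|_{L^2}\approx t^{-\frac{n+2s}{4\sigma_{\min}}}$ for the $L^1$ estimate, the $L^\infty$ bound $\sup_\xi|\xi|^s\mathrm{e}^{-c|\xi|^{2\sigma_{\min}}t}\lesssim(1+t)^{-\frac{s}{2\sigma_{\min}}}$ for the $L^2$ estimate, and $s\leqslant\sigma_{\max}$ absorbing the $|\xi|^{-\sigma_{\max}}$ loss on $\ml{Z}_{\extt}$), your Term A is the paper's estimate \eqref{Eq-01} with the same strictly positive gain $\alpha_{\min}=\min\{2-2\sigma,2\sigma\}$ or $\min\{2,2\sigma-2\}$, and the lower bound is the same reverse triangle inequality. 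The one genuine divergence is Term B, i.e.\ the passage from $\ml{G}(t,|D|)v_{0+1}$ to $G(t,\cdot)P_{v_{0+1}}$: the paper works in physical space, writing the difference as a convolution remainder, applying the mean value theorem $|G(t,x-y)-G(t,x)|\lesssim|y||\nabla G(t,x-\theta_0y)|$, and splitting the $y$-integral at $|y|\leqslant h(t):=t^{\frac{1}{4\sigma_{\min}}}$ (see \eqref{Eq-02}); you instead stay on the Fourier side, cancel the prefactor by the rescaling $\xi=t^{-1/(2\sigma_{\min})}\eta$, and conclude by continuity of $\widehat{v_{0+1}}$ at the origin plus dominated convergence. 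Both are valid and yield exactly the stated $o(t^{-\frac{n+2s}{4\sigma_{\min}}})$; your version is more elementary and requires only $v_{0+1}\in L^1$ through Riemann--Lebesgue, while the paper's version quantifies the near-field contribution (an extra factor $t^{-\frac{1}{4\sigma_{\min}}}$) and, more importantly, is the argument that is recycled verbatim for the nonlinear term $A_3$ in the proof of Theorem \ref{Thm-Large-Time}, where the integrand $|u(\tau,\cdot)|^p$ varies with $\tau$ and the dominated-convergence shortcut is less convenient. No gaps.
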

\begin{proof}
By using the polar coordinate, the equivalent relation
\begin{align}\label{Eq-04}
	\big\|\chi_{\intt}(\xi)|\xi|^s\mathrm{e}^{-c|\xi|^{2\theta}t}\big\|_{L^2}\approx\Big(\int_0^{\varepsilon_0}r^{2s+n-1}\mathrm{e}^{-2cr^{2\theta}t}\,\mathrm{d}r\Big)^{1/2}\approx t^{-\frac{n+2s}{4\theta}}
\end{align}
holds for $t\gg1$, which is bounded for $t\leqslant 1$.
 According to Propositions \ref{Prop-Small} and \ref{Prop-Large}, one derives
\begin{align*}
\|v(t,\cdot)\|_{\dot{H}^s}&\lesssim \big\|\chi_{\intt}(\xi)|\xi|^s\mathrm{e}^{-c|\xi|^{2\sigma_{\min}}t}\big\|_{L^2}\|(v_0,v_1)\|_{L^1\times L^1}+\mathrm{e}^{-ct}\|(v_0,v_1)\|_{\dot{H}^s\times \dot{H}^{\max\{s-\sigma_{\max},0\}}}\\
&\lesssim (1+t)^{-\frac{n+2s}{4\sigma_{\min}}}\|(v_0,v_1)\|_{L^1\times L^1}+\mathrm{e}^{-ct}\|(v_0,v_1)\|_{\dot{H}^s\times L^2}.
\end{align*}
Moreover, with the aid of $\displaystyle{\sup_{|\xi|\leqslant\varepsilon_0}|\xi|^s\mathrm{e}^{-c|\xi|^{2\theta}t}\lesssim (1+t)^{-\frac{s}{2\theta}}}$, we arrive at
\begin{align*}
\|v(t,\cdot)\|_{\dot{H}^s}&\lesssim \big\|\chi_{\intt}(\xi)|\xi|^s\mathrm{e}^{-c|\xi|^{2\sigma_{\min}}t}\big\|_{L^{\infty}}\|(v_0,v_1)\|_{L^2\times L^2}+\mathrm{e}^{-ct}\|(v_0,v_1)\|_{\dot{H}^s\times \dot{H}^{\max\{s-\sigma_{\max},0\}}}\\
&\lesssim (1+t)^{-\frac{s}{2\sigma_{\min}}}\|(v_0,v_1)\|_{L^2\times L^2}+\mathrm{e}^{-ct}\|(v_0,v_1)\|_{\dot{H}^s\times L^2}.
\end{align*}

We now turn to the large-time behavior as $t\gg1$. Similarly to the above calculations, we obtain
\begin{align*}
\|v(t,\cdot)-\ml{G}(t,|D|)v_{0+1}(\cdot)\|_{\dot{H}^s}&\lesssim\big\|\chi_{\intt}(\xi)|\xi|^s\big(\mathrm{e}^{-ct}+|\xi|^{\min\{2-2\sigma,2\sigma\}}\mathrm{e}^{-c|\xi|^{2\sigma}t}\big)\big\|_{L^2}\|(v_0,v_1)\|_{L^1\times L^1}\\
&\quad+\mathrm{e}^{-ct}\|(v_0,v_1)\|_{\dot{H}^s\times L^2}+\big\|\chi_{\extt}(\xi)|\xi|^s\mathrm{e}^{-b|\xi|^{2\sigma}t}\big\|_{L^{\infty}}\|(v_0,v_1)\|_{L^2\times L^2}\\
&\lesssim t^{-\frac{n+2s+2\min\{2-2\sigma,2\sigma\}}{4\sigma}}\|(v_0,v_1)\|_{L^1\times L^1}+\mathrm{e}^{-ct}\|(v_0,v_1)\|_{H^s\times L^2}
\end{align*}
if $\sigma\in(0,1)$; similarly,
\begin{align*}
\|v(t,\cdot)-\ml{G}(t,|D|)v_{0+1}(\cdot)\|_{\dot{H}^{s}}
&\lesssim t^{-\frac{n+2s+2\min\{2,2\sigma-2\}}{4}}\|(v_0,v_1)\|_{L^1\times L^1}+\mathrm{e}^{-ct}\|(v_0,v_1)\|_{H^s\times L^2}
\end{align*}
if $\sigma\in(1,+\infty)$. Therefore,
\begin{align}\label{Eq-01}
\|v(t,\cdot)-\ml{G}(t,|D|)v_{0+1}(\cdot)\|_{\dot{H}^s}=o\big(t^{-\frac{n+2s}{4\sigma_{\min}}}\big),
\end{align}
due to $\min\{2-2\sigma,2\sigma\}>0$ if $\sigma\in(0,1)$ and $\min\{2,2\sigma-2\}>0$ if $\sigma\in(1,+\infty)$.
 The mean value theorem shows
\begin{align*}
|G(t,x-y)-G(t,x)|\lesssim |y|\,|\nabla G(t,x-\theta_0y)|\ \  \text{with}\ \ \theta_0\in(0,1).
\end{align*}
From the separating line $h(t):=t^{\frac{1}{4\sigma_{\min}}}$, we employ the last two inequalities to get
\begin{align}\label{Eq-02}
&\|\ml{G}(t,|D|)v_{0+1}(\cdot)-G(t,\cdot)P_{v_{0+1}}\|_{\dot{H}^{s}}\notag\\
&\quad\lesssim\Big\|\int_{|y|\leqslant h(t)}[G(t,\cdot-y)-G(t,\cdot)]v_{0+1}(y)\,\mathrm{d}y\Big\|_{\dot{H}^s}+\Big\|\int_{|y|\geqslant h(t)}[|G(t,\cdot-y)|+|G(t,\cdot)|]\,|v_{0+1}(y)|\,\mathrm{d}y\Big\|_{\dot{H}^s}\notag\\
&\quad\lesssim h(t)\|\,|\xi|\widehat{\ml{G}}(t,|\xi|)\|_{\dot{H}^s}\|v_{0+1}\|_{L^1}+\|\widehat{\ml{G}}(t,|\xi|)\|_{\dot{H}^s}\|v_{0+1}\|_{L^1(|x|\geqslant h(t))}\notag\\
&\quad\lesssim t^{-\frac{n+2s+2}{4\sigma_{\min}}+\frac{1}{4\sigma_{\min}}}\|v_{0+1}\|_{L^1}+o\big(t^{-\frac{n+2s}{4\sigma_{\min}}}\big)=o\big(t^{-\frac{n+2s}{4\sigma_{\min}}}\big).
\end{align}
The combination of \eqref{Eq-01} and \eqref{Eq-02} is to verify \eqref{Eq-03}. Finally, let us consider the lower bound
\begin{align*}
\|v(t,\cdot)\|_{\dot{H}^s}&\geqslant \|G(t,\cdot)\|_{\dot{H}^{s}}|P_{v_{0+1}}|-\|v(t,\cdot)-G(t,\cdot)P_{v_{0+1}}\|_{\dot{H}^s}\gtrsim t^{-\frac{n+2s}{4\sigma_{\min}}}|P_{v_{0+1}}|
\end{align*}
for large-time $t\gg1$, provided that $P_{v_{0+1}}\neq 0$, where we used \eqref{Eq-04} and \eqref{Eq-03}.
\end{proof}

\section{Existence for semilinear damped wave equations driven by $\ml{L}_{a,b}$}\setcounter{equation}{0}\label{Section-Global}
\subsection{Philosophy of our proofs}
\hspace{5mm}Before proving the local/global in-time existence results and the sharp lower bound estimates of lifespan $T_{\varepsilon}$, as preparations, we at first explain our philosophy.

For any $T>0$, we introduce the evolution space of solutions by $X_T:=\ml{C}([0,T],H^{\sigma_{\min}})$ equipping the following time-weighted norm:
\begin{align*}
	\|u\|_{X_T}:=\sup\limits_{t\in[0,T]}\big((1+t)^{\frac{n}{4\sigma_{\min}}}\|u(t,\cdot)\|_{L^2}+(1+t)^{\frac{n+2\sigma_{\min}}{4\sigma_{\min}}}\|u(t,\cdot)\|_{\dot{H}^{\sigma_{\min}}}\big).
\end{align*}
According to Duhamel's principle, concerning the semilinear local-nonlocal equation \eqref{Eq-Nonlinear-Local-Nonlocal}, let us construct the nonlinear integral operator
\begin{align*}
	\ml{N}:\ u\in X_T\to\ml{N}[u]:=\varepsilon u^{\lin}+u^{\nlin},
\end{align*}
where $u^{\lin}$ denotes the solution of linearized Cauchy problem \eqref{Eq-Linear-Local-Nonlocal}, and $u^{\nlin}$ is defined by
\begin{align*}
	u^{\nlin}(t,x):=\int_0^t\ml{K}_1(t-\tau,|D|)|u(\tau,x)|^p\,\mathrm{d}\tau.
\end{align*}

Our assumption $(u_0,u_1)\in (H^{\sigma_{\min}}\cap L^1)\times (L^2\cap L^1)$ indicates $u^{\lin}\in X_T$ for any $T>0$ and the uniform estimate (i.e. Proposition \ref{Prop-Decay-Estimates} with $s\in\{0,\sigma_{\min}\}$)
\begin{align*}
	\|u^{\lin}\|_{X_T}\leqslant C_0\|(u_0,u_1)\|_{(H^{\sigma_{\min}}\cap L^1)\times (L^2\cap L^1)}
\end{align*}
with a suitable constant $C_0>0$. Our goal is to prove the existence results for a unique fixed point $u$ of the nonlinear integral operator $\ml{N}$ in the space $X_T$. Thus, we apply the Banach contraction principle for any $u$ and $\tilde{u}$ in the set
\begin{align*}
	\ml{B}_{\kappa}(X_T):=\big\{u\in X_T:\ \|u\|_{X_T}\leqslant\kappa:=2\varepsilon C_0\|(u_0,u_1)\|_{(H^{\sigma_{\min}}\cap L^1)\times (L^2\cap L^1)}\big\}.
\end{align*}
In other words, we will prove that the following fundamental inequalities:
\begin{align}
	\|\ml{N}[u]\|_{X_T}&\leqslant \varepsilon C_0\|(u_0,u_1)\|_{(H^{\sigma_{\min}}\cap L^1)\times (L^2\cap L^1)}+C_1(T)\|u\|_{X_T}^p,\label{Crucial-01}\\
	\|\ml{N}[u]-\ml{N}[\tilde{u}]\|_{X_T}&\leqslant C_1(T)\|u-\tilde{u}\|_{X_T}(\|u\|_{X_T}^{p-1}+\|\tilde{u}\|_{X_T}^{p-1}),\label{Crucial-02}
\end{align}
hold with a suitable function $C_1=C_1(T)>0$ depending on $T$. In the forthcoming subsection, we will demonstrate the crucial estimate
\begin{align}\label{Crucial}
C_1(T)\lesssim \int_0^{T}(1+\tau)^{-\frac{n(p-1)}{2\sigma_{\min}}}\,\mathrm{d}\tau+(1+T)^{-\frac{n(p-1)}{2\sigma_{\min}}+1}.
\end{align}
Let us now separate our discussion according to the value of $p$.
\begin{description}
	\item[The super-critical case $p>p_{\mathrm{crit}}$:] It is trivial to see that
	\begin{align*}
	\int_0^{T}(1+\tau)^{-\frac{n(p-1)}{2\sigma_{\min}}}\,\mathrm{d}\tau\lesssim 1\ \ \text{and}\ \ (1+T)^{-\frac{n(p-1)}{2\sigma_{\min}}+1}\lesssim 1
	\end{align*}
	uniformly in-time $T$, that is to say, $C_1(T)\leqslant C_2$ for any $T>0$, where $C_2$ is uniformly bounded with respect to $T$. Taking $T=+\infty$, the estimates \eqref{Crucial-01} and \eqref{Crucial-02} imply, respectively,
	\begin{align*}
	\|\ml{N}[u]\|_{X_{+\infty}}\leqslant\frac{3}{4}\kappa\ \ \text{and}\ \ \|\ml{N}[u]-\ml{N}[\tilde{u}]\|_{X_{+\infty}}\leqslant\frac{1}{2}\|u-\tilde{u}\|_{X_{+\infty}},
	\end{align*}
	if $4C_2\kappa^{p-1}\leqslant 1$. Namely, the choice of small size $\varepsilon$ leads to
	\begin{align*}
	0<\varepsilon\leqslant\varepsilon_1:=(4C_2)^{-\frac{1}{p-1}}\big(2C_0\|(u_0,u_1)\|_{(H^{\sigma_{\min}}\cap L^1)\times (L^2\cap L^1)}\big)^{-1}.
	\end{align*}
	Thus, it follows that $u\in\ml{B}_{\kappa}(X_{+\infty})$. As a byproduct, we also find  our desired estimate \eqref{Est-Decay}.
	\item[The sub-critical case $p<p_{\mathrm{crit}}$:] We may get
	\begin{align*}
	C_1(T)\leqslant C_3(1+T)^{-\frac{n(p-1)}{2\sigma_{\min}}+1},
	\end{align*}
	with a uniformly in-time suitable constant $C_3>0$. Analogously to the super-critical case, the operator $\ml{N}$ is a contraction on the ball $\ml{B}_{\kappa}(X_T)$ for $T$ satisfying
	\begin{align}\label{Est-T-sub}
	4C_3(1+T)^{-\frac{n(p-1)}{2\sigma_{\min}}+1}\kappa^{p-1}\leqslant 1 \ \ \Rightarrow \ \ 	T\lesssim \varepsilon^{-\frac{2\sigma_{\min}(p-1)}{2\sigma_{\min}-n(p-1)}}.
	\end{align}
	Therefore, for any $T>0$ such that \eqref{Est-T-sub} holds, a direct application of Banach fixed point argument leads to the existence and uniquely local in-time solution $u\in\ml{B}_{\kappa}(X_T)$. The lower bound estimates of lifespan $T_{\varepsilon}$ in the sub-critical case has been derived.
	\item[The critical case $p=p_{\mathrm{crit}}$:] For a uniformly in-time suitable constant $C_4>0$, it follows that
	\begin{align*}
	C_1(T)\leqslant C_4\ln(1+T).
	\end{align*}
	From the necessary condition (to ensure the local in-time existence of solution), one gets
	\begin{align*}
	4C_4\ln(1+T)\kappa^{p-1}\leqslant 1 \ \ \Rightarrow \ \ 	T\lesssim\exp(C\varepsilon^{-(p-1)}),
	\end{align*}
	which shows our desired lower bound estimate for lifespan $T_{\varepsilon}$ in the critical case.
\end{description}
All in all, to complete the proofs of Theorems \ref{Thm-GESDS} and \ref{Thm-Lower-Lifespan}, it remains to justify \eqref{Crucial} only.

\subsection{Proofs of Theorems \ref{Thm-GESDS} and \ref{Thm-Lower-Lifespan}}
\hspace{5mm}Let us recall the definition of $X_T$. Concerning $m\in\{1,2\}$, by using the fractional Gagliardo-Nirenberg inequality, we are able to conclude 
\begin{align*}
\|\,|u(\tau,\cdot)|^p\|_{L^m}=\|u(\tau,\cdot)\|_{L^{mp}}^p&\lesssim \|u(\tau,\cdot)\|_{L^2}^{(1-\beta_0)p}\|u(\tau,\cdot)\|_{\dot{H}^{\sigma_{\min}}}^{\beta_0p}\\
&\lesssim (1+\tau)^{\frac{n}{2m\sigma_{\min}}-\frac{np}{2\sigma_{\min}}}\|u\|_{X_T}^p,
\end{align*}
where $\beta_0:=(\frac{1}{2}-\frac{1}{mp})\frac{n}{\sigma_{\min}}\in[0,1]$ leading to $p\geqslant 2$, and $p\leqslant\frac{n}{n-2\sigma_{\min}}$ if $n>2\sigma_{\min}$. Let us consider $s\in\{0,\sigma_{\min}\}$. By employing the $(L^2\cap L^1)-L^2$ estimate in $[0,\frac{t}{2}]$ and the $L^2-L^2$ estimate in $[\frac{t}{2},t]$ from Proposition \ref{Prop-Decay-Estimates}, one deduces
\begin{align*}
\|u^{\nlin}(t,\cdot)\|_{\dot{H}^s}&\lesssim \int_0^{\frac{t}{2}}(1+t-\tau)^{-\frac{n+2s}{4\sigma_{\min}}}\|\,|u(\tau,\cdot)|^p\|_{L^2\cap L^1}\,\mathrm{d}\tau+\int_{\frac{t}{2}}^t(1+t-\tau)^{-\frac{s}{2\sigma_{\min}}}\|\,|u(\tau,\cdot)|^p\|_{L^2}\,\mathrm{d}\tau\\
&\lesssim (1+t)^{-\frac{n+2s}{4\sigma_{\min}}}\int_0^{\frac{t}{2}}(1+\tau)^{-\frac{n(p-1)}{2\sigma_{\min}}}\,\mathrm{d}\tau\,\|u\|_{X_T}^p\\
&\quad+(1+t)^{\frac{n}{4\sigma_{\min}}-\frac{np}{2\sigma_{\min}}}\int_{\frac{t}{2}}^t(1+t-\tau)^{-\frac{s}{2\sigma_{\min}}}\,\mathrm{d}\tau\,\|u\|_{X_T}^p\\
&\lesssim (1+t)^{-\frac{n+2s}{4\sigma_{\min}}}\Big(\int_0^{\frac{t}{2}}(1+\tau)^{-\frac{n(p-1)}{2\sigma_{\min}}}\,\mathrm{d}\tau+(1+t)^{-\frac{n(p-1)}{2\sigma_{\min}}+1}\Big)\|u\|_{X_T}^p,
\end{align*}
where the asymptotic relations $1+t-\tau\approx 1+t$ if $\tau\in[0,\frac{t}{2}]$ and $1+\tau\approx1+t$ if $\tau\in [\frac{t}{2},t]$. That is to say,
\begin{align*}
\|u^{\nlin}\|_{X_T}\lesssim\Big( \int_0^{T}(1+\tau)^{-\frac{n(p-1)}{2\sigma_{\min}}}\,\mathrm{d}\tau+(1+T)^{-\frac{n(p-1)}{2\sigma_{\min}}+1}\Big)\|u\|_{X_T}^p,
\end{align*}
 analogously,
\begin{align*}
\|\ml{N}[u]-\ml{N}[\tilde{u}]\|_{X_T}&\lesssim \Big( \int_0^{T}(1+\tau)^{-\frac{n(p-1)}{2\sigma_{\min}}}\,\mathrm{d}\tau+(1+T)^{-\frac{n(p-1)}{2\sigma_{\min}}+1}\Big)\|u-\tilde{u}\|_{X_T}\big(\|u\|_{X_T}^{p-1}+\|\tilde{u}\|_{X_T}^{p-1}\big).
\end{align*}
For this reason, our desired estimate \eqref{Crucial} is established.

\subsection{Proof of Theorem \ref{Thm-Large-Time}}
\hspace{5mm} Let us see that the global in-time solution demonstrated in Theorem \ref{Thm-GESDS} has the integral form
\begin{align}\label{Integral-Representation}
u(t,x)=\varepsilon\ml{K}_0(t,|D|)u_0(x)+\varepsilon\ml{K}_1(t,|D|)u_1(x)+\int_0^t\ml{K}_1(t-\tau,|D|)|u(\tau,x)|^p\,\mathrm{d}\tau.
\end{align}
Recalling the refined estimates for the linearized problem in Proposition \ref{Prop-Decay-Estimates}, in order to justify our desired error estimates, we have to consider each kernel in the nonlinear parts. In the following discussion, we consider large-time $t\gg1$ without more repetition.

First of all, we carry out a suitable decomposition (into five parts) as follows:
\begin{align*}
\int_0^t\ml{K}_1(t-\tau,|D|)[u(\tau,x)]^p\,\mathrm{d}\tau-G(t,x)\ml{P}_{|u|^p}=\sum\limits_{j\in\{1,2,\dots,5\}}A_j(t,x),
\end{align*}
where we took
\begin{align*}
A_1(t,x)&:=\int_0^{\frac{t}{2}}\big(\ml{K}_1(t-\tau,|D|)-\ml{G}(t-\tau,|D|)\big)|u(\tau,x)|^p\,\mathrm{d}\tau,\\
A_2(t,x)&:=\int_0^{\frac{t}{2}}\big(\ml{G}(t-\tau,|D|)-\ml{G}(t,|D|)\big)|u(\tau,x)|^p\,\mathrm{d}\tau,\\
A_3(t,x)&:=\int_0^{\frac{t}{2}}\Big(\ml{G}(t,|D|)|u(\tau,x)|^p-\int_{\mb{R}^n}|u(\tau,y)|^p\,\mathrm{d}y\,G(t,x)\Big)\mathrm{d}\tau,\\
A_4(t,x)&:=\int_{\frac{t}{2}}^t\ml{K}_1(t-\tau,|D|)|u(\tau,x)|^p\,\mathrm{d}\tau,\\
A_5(t,x)&:=-G(t,x)\int_{\frac{t}{2}}^{+\infty}\int_{\mb{R}^n}|u(\tau,y)|^p\,\mathrm{d}y\,\mathrm{d}\tau.
\end{align*}
Let us denote
\begin{align*}
\alpha_{\min}:=\begin{cases}
\min\{2-2\sigma,2\sigma\}&\text{if}\ \ \sigma\in(0,1),\\
\min\{2,2\sigma-2\}&\text{if}\ \ \sigma\in(1,+\infty),
\end{cases}
\end{align*}
so the proof of \eqref{Eq-01} has shown
\begin{align*}
\|\ml{K}_1(t,|D|)g(\cdot)-\ml{G}(t,|D|)g(\cdot)\|_{\dot{H}^s}\lesssim t^{-\frac{n+2s+2\alpha_{\min}}{4\sigma_{\min}}}\|g\|_{L^2\cap L^1}.
\end{align*}
\begin{itemize}
	\item[$\bullet$] According to Proposition \ref{Prop-Decay-Estimates} and the decay estimate \eqref{Est-Decay} of global in-time solution $u$, the first term can be estimated by
	\begin{align*}
	t^{\frac{n+2s}{4\sigma_{\min}}}\|A_1(t,\cdot)\|_{\dot{H}^s}&\lesssim t^{\frac{n+2s}{4\sigma_{\min}}} \int_0^{\frac{t}{2}}(t-\tau)^{-\frac{n+2s+2\alpha_{\min}}{4\sigma_{\min}}}\|\,|u(\tau,\cdot)|^p\|_{L^2\cap L^1}\,\mathrm{d}\tau\\
	&\lesssim t^{-\frac{\alpha_{\min}}{2\sigma_{\min}}}\int_0^{\frac{t}{2}}(1+\tau)^{-\frac{n(p-1)}{2\sigma_{\min}}}\,\mathrm{d}\tau\,\|u\|_{X_{+\infty}}^p\\
	&\lesssim\varepsilon^p\,t^{-\frac{\alpha_{\min}}{2\sigma_{\min}}}\|(u_0,u_1)\|_{(H^{\sigma_{\min}}\cap L^1)\times (L^2\cap L^1)}^p.
	\end{align*}
	\item By using the mean value theorem with respect to $t$, i.e.
	\begin{align*}
	\ml{G}(t-\tau,|D|)-\ml{G}(t,|D|)=-\tau\ml{G}_t(t-\theta_1\tau,|D|)\ \ \text{with}\ \ \theta_1\in(0,1),
	\end{align*}
	the second term can be estimated by
	\begin{align*}
	t^{\frac{n+2s}{4\sigma_{\min}}}\|A_2(t,\cdot)\|_{\dot{H}^s}&\lesssim t^{\frac{n+2s}{4\sigma_{\min}}}\int_0^{\frac{t}{2}}\tau (t-\tau)^{-\frac{n+2s}{4\sigma_{\min}}-1}\|\,|u(\tau,\cdot)|^p\|_{L^2\cap L^1}\,\mathrm{d}\tau\\
	&\lesssim\varepsilon^p\, t^{-1}\int_0^{\frac{t}{2}}(1+\tau)^{1-\frac{n(p-1)}{2\sigma_{\min}}}\,\mathrm{d}\tau\,\|u\|_{X_{+\infty}}^p\\
	&\lesssim \varepsilon^p\|(u_0,u_1)\|_{(H^{\sigma_{\min}}\cap L^1)\times (L^2\cap L^1)}^p\times\begin{cases}
	t^{1-\frac{n(p-1)}{2\sigma_{\min}}}&\text{if}\ \ p<p_{\mathrm{crit}}(n,2\sigma_{\min}),\\
	t^{-1}\ln t&\text{if}\ \ p=p_{\mathrm{crit}}(n,2\sigma_{\min}),\\
	t^{-1}&\text{if}\ \ p>p_{\mathrm{crit}}(n,2\sigma_{\min}),
	\end{cases}
	\end{align*}
	whose right-hand sides tend to zero as $t\to+\infty$.
	\item By the same way as \eqref{Eq-02}, i.e. an application of mean value theorem with respect to $x$ and the additional separation via $h(t):=t^{\frac{1}{4\sigma_{\min}}}$, one derives
	\begin{align*}
	t^{\frac{n+2s}{4\sigma_{\min}}}\|A_3(t,\cdot)\|_{\dot{H}^s}&\lesssim t^{-\frac{1}{4\sigma_{\min}}}\int_0^{\frac{t}{2}}\|\,|u(\tau,\cdot)|^p\|_{L^1}\,\mathrm{d}\tau+\int_0^{\frac{t}{2}}\|\,|u(\tau,\cdot)|^p\|_{L^1(|x|\geqslant h(\tau))}\,\mathrm{d}\tau\\
	&\lesssim \varepsilon^p\, t^{-\frac{1}{4\sigma_{\min}}}\int_0^{+\infty}(1+\tau)^{-\frac{n(p-1)}{2\sigma_{\min}}}\,\mathrm{d}\tau\,\|(u_0,u_1)\|_{(H^{\sigma_{\min}}\cap L^1)\times (L^2\cap L^1)}^p\\
	&\quad+\int_0^{+\infty}\|\,|u(\tau,\cdot)|^p\|_{L^1(|x|\geqslant h(\tau))}\,\mathrm{d}\tau.
	\end{align*}
	According to the fact that
	\begin{align*}
	\|\,|u(\tau,\cdot)|^p\|_{L^1([0,+\infty)\times\mb{R}^n)}\lesssim \varepsilon\,\int_0^{+\infty}(1+\tau)^{-\frac{n(p-1)}{2\sigma_{\min}}}\,\mathrm{d}\tau\,\|(u_0,u_1)\|_{(H^{\sigma_{\min}}\cap L^1)\times (L^2\cap L^1)}^p<+\infty 
	\end{align*}
	from $p>p_{\mathrm{crit}}$, we claim
	\begin{align*}
	\lim\limits_{t\to+\infty}\int_0^{+\infty}\|\,|u(\tau,\cdot)|^p\|_{L^1(|x|\geqslant h(\tau))}\,\mathrm{d}\tau=0.
	\end{align*}
	It tells us that
	\begin{align*}
	\lim\limits_{t\to+\infty}t^{\frac{n+2s}{4\sigma_{\min}}}\|A_3(t,\cdot)\|_{\dot{H}^s}=0.
	\end{align*}
	\item After applying the $L^2-L^2$ estimate in Proposition \ref{Prop-Decay-Estimates}, the fourth term can be estimated by
	\begin{align*}
	t^{\frac{n+2s}{4\sigma_{\min}}}\|A_4(t,\cdot)\|_{\dot{H}^s}&\lesssim t^{\frac{n+2s}{4\sigma_{\min}}}\int_{\frac{t}{2}}^t(1+t-\tau)^{-\frac{s}{2\sigma_{\min}}}\|\,|u(\tau,\cdot)|^p\|_{L^2}\,\mathrm{d}\tau\\
	&\lesssim \varepsilon^p\, t^{-\frac{n(p-1)}{2\sigma_{\min}}+1}\|(u_0,u_1)\|_{(H^{\sigma_{\min}}\cap L^1)\times (L^2\cap L^1)}^p,
	\end{align*}
	whose right-hand side tends to zero as $t\to+\infty$ thanks to $p>p_{\mathrm{crit}}$.
	\item The application of the decay estimates \eqref{Eq-04} and \eqref{Est-Decay} shows
	\begin{align*}
	t^{\frac{n+2s}{4\sigma_{\min}}}\|A_5(t,\cdot)\|_{\dot{H}^s}	&\lesssim \varepsilon^p\,t^{\frac{n+2s}{4\sigma_{\min}}}\int_{\frac{t}{2}}^{+\infty}(1+\tau)^{-\frac{n(p-1)}{2\sigma_{\min}}}\,\mathrm{d}\tau\,\|G(t,\cdot)\|_{\dot{H}^s}\|(u_0,u_1)\|_{(H^{\sigma_{\min}}\cap L^1)\times (L^2\cap L^1)}^p\\
	&\lesssim\varepsilon^p\,t^{-\frac{n(p-1)}{2\sigma_{\min}}+1}\|(u_0,u_1)\|_{(H^{\sigma_{\min}}\cap L^1)\times (L^2\cap L^1)}^p.
	\end{align*}
	Again, its right-hand side tends to zero as $t\to+\infty$.
\end{itemize}
All in all, we have just proved that
\begin{align*}
\lim\limits_{t\to+\infty}t^{\frac{n+2s}{4\sigma_{\min}}}\big\|u^{\nlin}(t,\cdot)-G(t,\cdot)\ml{P}_{|u|^p}\big\|_{\dot{H}^s}=0.
\end{align*}

Combining the derived estimate for $u^{\lin}(t,\cdot)$ in Proposition \ref{Prop-Decay-Estimates}, via the integral representation \eqref{Integral-Representation}, we complete the derivation of asymptotic profiles for the global in-time solution. Additionally, using the triangle inequality, one arrives at
\begin{align*}
\|u(t,\cdot)\|_{\dot{H}^s}&\geqslant \|G(t,\cdot)\|_{\dot{H}^s}\big|\varepsilon P_{u_{0+1}}+\ml{P}_{|u|^p}\big|-\big\|u(t,\cdot)-G(t,\cdot)\big( \varepsilon P_{u_{0+1}}+\ml{P}_{|u|^p}\big)\big\|_{\dot{H}^s}\\
&\gtrsim t^{-\frac{n+2s}{4\sigma_{\min}}}\big|\varepsilon P_{u_{0+1}}+\ml{P}_{|u|^p}\big|-o\big(t^{-\frac{n+2s}{4\sigma_{\min}}}\big)\gtrsim t^{-\frac{n+2s}{4\sigma_{\min}}}\big|\varepsilon P_{u_{0+1}}+\ml{P}_{|u|^p}\big|
\end{align*}
as large-time $t\gg1$ via \eqref{Eq-04}. Hence, the sharp large-time lower bound estimate for the global in-time solution $u$ can be demonstrated.

\section{Blow-up for semilinear damped wave equations driven by $\ml{L}_{a,b}$}\setcounter{equation}{0}\label{Section-Blow-up}
\hspace{5mm} At first, let us introduce the time-dependent test functions $\eta= \eta(t) \in \mathcal{C}_0^\infty([0,+\infty))$ having the property $\eta(t)=1$ if $0\leqslant t\leqslant \frac{1}{2}$; $\eta(t)=0$ if $t\geqslant1$; and decreasing if $\frac{1}{2}\leqslant t\leqslant 1$; together with the condition
\begin{align}\label{Condition.Eta}
	\eta^{-\frac{p'}{p}}(t)\big(|\eta'(t)|^{p'}+|\eta''(t)|^{p'}\big)\leqslant C\ \ \text{for any}\ \ t\in[\tfrac{1}{2},1].
\end{align}
Also, we define the radial space-dependent test function $\varphi=\varphi(x):=\langle x\rangle^{-n-2\sigma_0}$ with
\begin{align*}
\sigma_0:= \begin{cases}
	\text{a constant in }(0,1) &\text{if}\ \  \sigma\in\mb{N}_+, \\
	\sigma-[\sigma] &\text{if}\ \  \sigma\in\mb{R}_+\backslash\mb{N}_+.
\end{cases}
\end{align*}
It is clear to see the estimate
\begin{align}\label{Condition.Laplace}
	|(-\Delta)^{\sigma} \langle x\rangle^{-n-2\sigma_0}| \lesssim \langle x\rangle^{-n-2\sigma_0}.
\end{align}
Let $R$ and $K$ be large parameters in $[0,+\infty)$. We introduce the test function  $\phi_R(t,x):= \eta_R(t)\, \varphi_R(x)$, where $\eta_R(t):= \eta(R^{-2\sigma_{\min}}t)$ and $\varphi_R(x):= \varphi(K^{-1}R^{-1}x)$, then define the functionals
\begin{align*}
	J_R := \int_0^{+\infty}\int_{\mb{R}^n}|u(t,x)|^p \phi_R(t,x)\,\mathrm{d}x\,\mathrm{d}t\ \ \text{and}\ \ 
	\tilde{J}_R := \int_{R^{2\sigma_{\min}}/2}^{R^{2\sigma_{\min}}}\int_{\mb{R}^n}|u(t,x)|^p \phi_R(t,x)\,\mathrm{d}x\,\mathrm{d}t.
\end{align*}
Assume that $u$ is a global in-time Sobolev solution from $\mathcal{C}([0,+\infty),L^2)$ to \eqref{Eq-Nonlinear-Local-Nonlocal}. After multiplying both sides of \eqref{Eq-Nonlinear-Local-Nonlocal} by $\varphi_R$, we carry out integration by parts to achieve
\begin{align}
	0\leqslant J_R &= -\int_{\mb{R}^n} u_{0+1}(x)\,\varphi_R(x)\,\mathrm{d}x + \int_{R^{2\sigma_{\min}}/2}^{R^{2\sigma_{\min}}}\int_{\mb{R}^n}u(t,x)\, \eta''_R(t)\, \varphi_R(x)\,\mathrm{d}x\,\mathrm{d}t \notag \\
	&\quad -a\int_0^{+\infty}\int_{\mb{R}^n} u(t,x)\,\eta_R(t)\, \Delta\varphi_R(x)\,\mathrm{d}x\,\mathrm{d}t+ b\int_0^{+\infty}\int_{\mb{R}^n} u(t,x)\,\eta_R(t) \,(-\Delta)^{\sigma} \varphi_R(x)\,\mathrm{d}x\,\mathrm{d}t \notag \\
	&\quad -\int_{R^{2\sigma_{\min}}/2}^{R^{2\sigma_{\min}}}\int_{\mb{R}^n}\, u(t,x) \,\eta'_R(t)\, \varphi_R(x)\,\mathrm{d}x\,\mathrm{d}t \notag \\
	&=: -\int_{\mb{R}^n} u_1(x)\,\varphi_R(x)\,\mathrm{d}x+ J_{1,R}- J_{2,R}+ J_{3,R}- J_{4,R}, \label{the2.3-1}
\end{align}
where we notice that the  identity
\begin{align*}
\int_{\mb{R}^n} \varphi_R(x)\, (-\Delta)^{\sigma} u(t,x)\,\mathrm{d}x= \int_{\mb{R}^n} u(t,x)\, (-\Delta)^{\sigma} \varphi_R(x)\,\mathrm{d}x
\end{align*}
holds since $\varphi_R \in H^{2\sigma}$ and $u \in \mathcal{C}([0,+\infty),L^2)$. Applying the H\"{o}lder inequality with $\frac{1}{p}+\frac{1}{p'}=1$, we derive
\begin{align*}
	|J_{1,R}| &\leqslant \int_{R^{2\sigma_{\min}}/2}^{R^{2\sigma_{\min}}}\int_{\mb{R}^n} |u(t,x)|\, |\eta''_R(t)|\, \varphi_R(x) \, \mathrm{d}x\,\mathrm{d}t \\
	&\lesssim \Big(\int_{R^{2\sigma_{\min}}/2}^{R^{2\sigma_{\min}}}\int_{\mb{R}^n} |u(t,x)\,\phi^{\frac{1}{p}}_R(t,x)|^p\, \mathrm{d}x\,\mathrm{d}t\Big)^{\frac{1}{p}} \Big(\int_{R^{2\sigma_{\min}}/2}^{R^{2\sigma_{\min}}}\int_{\mb{R}^n} |\phi^{-\frac{1}{p}}_R(t,x)\, \eta''_R(t)\, \varphi_R(x)|^{p'}\, \mathrm{d}x\,\mathrm{d}t\Big)^{\frac{1}{p'}} \\
	&\lesssim \tilde{J}_{R}^{\frac{1}{p}} \Big(\int_{R^{2\sigma_{\min}}/2}^{R^{2\sigma_{\min}}}\int_{\mb{R}^n} \eta_R^{-\frac{p'}{p}}(t)\, |\eta''_R(t)|^{p'} \varphi_R(x)\, \mathrm{d}x\,\mathrm{d}t\Big)^{\frac{1}{q'}}.
\end{align*}
Using the change of variables $\tilde{t}:= R^{-2\sigma_{\min}}t$ and $\tilde{x}:= K^{-1}R^{-1}x$ leads to 
\begin{equation} \label{the2.3-2}
	|J_{1,R}| \lesssim \tilde{J}_{R}^{\frac{1}{p}} R^{-4\sigma_{\min}+ \frac{n+2\sigma_{\min}}{p'}}K^{\frac{n}{p'}}\Big(\int_{\mb{R}^n} \langle \tilde{x} \rangle^{-n-2\sigma_0}\, \mathrm{d}\tilde{x}\Big)^{\frac{1}{p'}},
\end{equation}
due to the relation $ \eta''_R(t)= R^{-4\sigma_{\min}}\eta''(\tilde{t})$ and the assumption \eqref{Condition.Eta}. In similar arguments, we also arrive at the following estimates:
\begin{align}
	|J_{2,R}| &\lesssim J_R^{\frac{1}{p}} R^{-2+ \frac{n+2\sigma_{\min}}{p'}}K^{\frac{n}{p'}-2}, \label{the2.3-3} \\
	|J_{3,R}| &\lesssim J_R^{\frac{1}{p}} R^{-2\sigma+ \frac{n+2\sigma_{\min}}{p'}}K^{\frac{n}{p'}-2\sigma}, \label{the2.3-4} \\
	|J_{4,R}| &\lesssim \tilde{J}_R^{\frac{1}{p}} R^{-2\sigma_{\min}+ \frac{n+2\sigma_{\min}}{p'}}K^{\frac{n}{p'}}, \label{the2.3-5}
\end{align}
thanks to the relations $\eta'_R(t)= R^{-2\sigma}\eta'(\tilde{t})$, $\Delta \varphi_R(x)= K^{-2}R^{-2}\Delta \varphi(\tilde{x})$ and
\begin{align*}
(-\Delta)^{\sigma}\varphi_R(x)= K^{-2\sigma}R^{-2\sigma}(-\Delta)^{\sigma}\varphi(\tilde{x})
\end{align*}
combined with the assumptions \eqref{Condition.Eta} and \eqref{Condition.Laplace}. Since the assumption on the initial data, there exists a constant $R_0>0$ such that
\begin{align} \label{the2.3-6}
	\int_{\mb{R}^n} u_{0+1}(x)\, \varphi_R(x)\,\mathrm{d}x >0\ \ \mbox{for any}\ \ R>R_0.
\end{align}
For this reason, linking all gained estimates from \eqref{the2.3-1} to \eqref{the2.3-6}, we have shown that
\begin{align} \label{the2.3-7}
	J_R+ \int_{\mb{R}^n} u_{0+1}(x)\, \varphi_R(x)\,\mathrm{d}x \lesssim R^{-2\sigma_{\min}+ \frac{n+2\sigma_{\min}}{p'}}\big(\tilde{J}_R^{\frac{1}{p}}K^{\frac{n}{p'}}+ J_R^{\frac{1}{p}}K^{\frac{n}{p'}-2\sigma_{\min}}\big)
\end{align}
for any $R>R_0$, which implies
\begin{align}\label{the2.3-8}
J_R \lesssim R^{-2\sigma_{\min}+ \frac{n+2\sigma_{\min}}{p'}} J_R^{\frac{1}{p}}K^{\frac{n}{p'}}\ \ \Rightarrow\ \ 	J_R^{\frac{1}{p'}} \lesssim R^{-2\sigma_{\min}+ \frac{n+2\sigma_{\min}}{p'}}K^{\frac{n}{p'}}
\end{align}
due to $\tilde{J}_R \leqslant J_R$.
Let us divide our consideration into two cases.
\begin{description}
	\item[The sub-critical case $p< p_{\mathrm{crit}}$:] This condition leads to the relation $-2\sigma_{\min}+ \frac{n+2\sigma_{\min}}{p'} < 0$. Hence, choosing $K=1$ and passing $R \to +\infty$ in \eqref{the2.3-8}, one gets
	\begin{align*}
		\int_0^{+\infty}\int_{\mb{R}^n}|u(t,x)|^p \,\mathrm{d}x\,\mathrm{d}t= 0,
	\end{align*}
	i.e. $u= 0$, which is a contradiction to the assumption of the initial data.
	\item[The critical case $p= p_{\mathrm{crit}}$:]  It follows that $-2\sigma_{\min}+ \frac{n+2\sigma_{\min}}{p'} = 0$.  
	At the first choice, we take $K=1$ into \eqref{the2.3-8} to see that $J_R\leqslant 1$, i.e. $J_R$ is uniformly bounded. As as result, one obtains $\displaystyle\lim_{R\to+\infty} \tilde{J}_R=0$. Next, for a sufficiently large number $K>0$, we recall \eqref{the2.3-7} and apply Young's inequality to get
	\begin{align*}
		J_R+ p'\int_{\mb{R}^n} u_{0+1}(x)\, \varphi_R(x)\,\mathrm{d}x \leqslant\widetilde{C}_1p'\tilde{J}_R^{\frac{1}{p}}K^{\frac{n}{p'}}+ \widetilde{C}_2p'K^{\frac{n}{p'}-2\sigma_{\min}}.
	\end{align*}
	Thus, it entails immediately the following estimate:
	\begin{align*}
		0< \int_{\mb{R}^n} u_{0+1}(x)\, \varphi_R(x)\,\mathrm{d}x \leqslant \widetilde{C}_1\tilde{J}_R^{\frac{1}{p}}K^{\frac{n}{p'}}+ \widetilde{C}_2K^{\frac{n}{p'}-2\sigma_{\min}}.
	\end{align*}
	Finally, letting $R\to +\infty$ first and then $K\to +\infty$ in the last inequality we conclude that $\int_{\mb{R}^n} u_{0+1}(x) \,\mathrm{d}x= 0$, 
	due to the fact $\frac{n}{p'}-2\sigma_{\min}<0$, which is a contradiction to the assumption of the initial data again.
\end{description}
In conclusion, we have established the blow-up results in Theorem \ref{Thm-Blow-up} in both the sub-critical case and the critical case. \medskip

To verify the lifespan estimates, we suppose that $u$ is a local in-time Sobolev solution to \eqref{Eq-Nonlinear-Local-Nonlocal} in $u \in \mathcal{C}([0,T),L^2)$.
\begin{description}
	\item[The sub-critical case $p< p_{\mathrm{crit}}$:] We repeat the same way as above to obtain
	\begin{align}
		J_R+ c\varepsilon \leqslant C J_R^{\frac{1}{p}} R^{-2\sigma_{\min}+ \frac{n+2\sigma_{\min}}{p'}}\ \ \Rightarrow \ \ c\varepsilon \leqslant C J_R^{\frac{1}{p}} R^{-2\sigma_{\min}+ \frac{n+2\sigma_{\min}}{p'}}- J_R,
		\label{the2.4-1}
	\end{align}
	where a suitable constant $c$ is chosen such that
	\begin{align*}
		\int_{\mb{R}^n} u_{0+1}(x)\, \varphi_R(x)\,\mathrm{d}x> c> 0 \ \ \mbox{for any}\ \ R> R_0.
	\end{align*}
	After applying the elementary inequality $By^\gamma- y \leqslant B^{\frac{1}{1-\gamma}}$ for any $B>0$, $y\geqslant 0$, and $0<\gamma<1$ to
	\eqref{the2.4-1}, we conclude that
	\begin{align*}
		\varepsilon \leqslant CR^{-2\sigma_{\min} p'+ n+ 2\sigma_{\min}}= CT^{-\frac{2\sigma_{\min}- n(p-1)}{2\sigma_{\min}(p-1)}}
	\end{align*}
	with $R= T^{\frac{1}{2\sigma_{\min}}}$. Letting $T \to T_\varepsilon$, it gives the desired upper bound estimate for lifespan.
	\item[The critical case $p= p_{\mathrm{crit}}$:] To verify the lifespan upper estimate in the critical case, we just need to repeat the same procedure as that in \cite[Theorem 2.4 or Proposition 1.1]{Chen-Girardi=2025}. The smaller exponent in $\ml{L}_{a,b}$ plays a dominant role because it produces a smaller power of $R^{-1}$. For the sake of readability, we omits its details.
\end{description}

\section*{Acknowledgments}
Wenhui Chen is supported in part by the National Natural Science Foundation of China (grant No. 12301270), Guangdong Basic and Applied Basic Research Foundation (grant No. 2025A1515010240). This research of Tuan Anh Dao is funded by Vietnam National Foundation for Science and Technology Development (NAFOSTED).

\end{document}